\newtheorem{theorem}{Theorem}[section]
\newtheorem{lemma}[theorem]{Lemma}
\theoremstyle{definition}
\theoremstyle{remark}
\newtheorem{remark}[theorem]{Remark}
\numberwithin{equation}{section}
\DeclareSymbolFont{SY}{U}{psy}{m}{n}
\DeclareMathSymbol{\emptyset}{\mathord}{SY}{'306}
\DeclareMathOperator*{\slim}{s-lim} 
 \DeclareMathOperator{\tr}{tr}
\DeclareMathOperator{\Ran}{Ran}
\DeclareMathOperator{\osc}{osc}
\DeclareMathOperator{\Spec}{Spec}
\DeclareMathOperator{\spec}{Spec}
\DeclareMathSymbol{\newtimes}{\mathbin}{SY}{'264}
\begin{document}
%
%  defs
%
\def\ra{{\sf R}}
\def\je{{\sf N}}
\def\I{\mathbf{I}}
\def\mA{\mathbf{A}}
\def\mM{\mathbf{M}}
\def\mU{\mathbf{U}}
\def\mX{\mathbf{X}}
\def\mV{\mathbf{V}}
\def\mW{\mathbf{W}}
\def\mY{\mathbf{Y}}
\def\mK{\mathbf{K}}
\def\mR{\mathbf{R}}
\def\mQ{\mathbf{Q}}
\def\mT{\mathbf{T}}
\def\mS{\mathbf{S}}
\def\mHm{\mathbf{H}}
\def\mH{\mathsf{A}}
\def\x{\mathcal{X}}
\def\q{\mathcal{Q}}
\def\b{\mathcal{B}}
\def\g{\mathcal{G}}
\def\d{\mathcal{D}}
\def\K{\mathcal{K}}
\def\H{\mathcal{H}}
\def\vp{\mathcal{V}}
\def\wp{\mathcal{W}}
\def\lp{\mathcal{L}}
\def\SS{\mathcal{S}}
\def\y{\mathcal{Y}}
\def\z{\mathcal{Z}}
\def\osc{\mathrm{osc}}
\def\bphi{\mb{phi}}

\newcommand{\uu}[1]{\mathbf{#1}}
\def\calI{{\mathcal I}}
\newcommand{\jmp}[1]{[\![#1]\!]}
\newcommand{\mvl}[1]{\{\!\!\{#1\}\!\!\}}
\def\hh{{\tt h}}
\def\cc{{\tt c}}
\def\pp{{\tt p}}

\renewcommand{\Im}{{\ensuremath{\mathrm{Im\,}}}}
\renewcommand{\Re}{{\ensuremath{\mathrm{Re\,}}}}

\newcommand{\diag}{\mathrm{diag}}

\newcommand{\HH}{\mathbb{H}}
\newcommand{\Q}{\mathbb{Q}}
\newcommand{\R}{\mathbb{R}}
\newcommand{\T}{\mathbb{T}}
\newcommand{\C}{\mathbb{C}}
\newcommand{\Z}{\mathbb{Z}}
\newcommand{\N}{\mathbb{N}}
\newcommand{\bP}{\mathbb{P}}
\newcommand{\PP}{\mathbb{P}}
\newcommand{\1}{\mathbb{I}}
\newcommand{\E}{\mathbb{E}}
\newcommand{\EE}{\mathsf{E}}

\newcommand{\fG}{\mathfrak{g}}
\newcommand{\fQ}{\mathfrak{Q}}
\newcommand{\fH}{\mathfrak{H}}
\newcommand{\fR}{\mathfrak{R}}
\newcommand{\fF}{\mathfrak{F}}
\newcommand{\fB}{\mathfrak{B}}
\newcommand{\fS}{\mathfrak{S}}
\newcommand{\fD}{\mathfrak{D}}
\newcommand{\fE}{\mathfrak{E}}
\newcommand{\fP}{\mathfrak{P}}
\newcommand{\fT}{\mathfrak{T}}
\newcommand{\fC}{\mathfrak{C}}
\newcommand{\fM}{\mathfrak{M}}
\newcommand{\fI}{\mathfrak{I}}
\newcommand{\fL}{\mathfrak{L}}
\newcommand{\fW}{\mathfrak{W}}
\newcommand{\fK}{\mathfrak{K}}
\newcommand{\fA}{\mathfrak{A}}
\newcommand{\fV}{\mathfrak{V}}
%%%%%%%%%%%%%%%%%%%%%%%%%%%%%%%%
%Kaliegraphie
%%%%%%%%%%%%%%%%%%%%%%%%%%%%%%%%
\newcommand{\cA}{{\mathcal A}}
\newcommand{\cB}{{\mathcal B}}
\newcommand{\cC}{{\mathcal C}}
\newcommand{\cD}{{\mathcal D}}
\newcommand{\cE}{{\mathcal E}}
\newcommand{\cF}{{\mathcal F}}
\newcommand{\cG}{{\mathcal G}}
\newcommand{\cH}{{\mathcal H}}
\newcommand{\cI}{{\mathcal I}}
\newcommand{\cJ}{{\mathcal J}}
\newcommand{\cK}{{\mathcal K}}
\newcommand{\cL}{{\mathcal L}}
\newcommand{\cM}{{\mathcal M}}
\newcommand{\cN}{{\mathcal N}}
\newcommand{\cO}{{\mathcal O}}
\newcommand{\cP}{{\mathcal P}}
\newcommand{\cQ}{{\mathcal Q}}
\newcommand{\cR}{{\mathcal R}}
\newcommand{\cS}{{\mathcal S}}
\newcommand{\cT}{{\mathcal T}}
\newcommand{\cU}{{\mathcal U}}
\newcommand{\cV}{{\mathcal V}}
\newcommand{\cX}{{\mathcal X}}
\newcommand{\cW}{{\mathcal W}}
\newcommand{\cZ}{{\mathcal Z}}

\newcommand{\sN}{{\mathsf N}}
\newcommand{\sT}{{\mathsf T}}
\newcommand{\sP}{{\mathsf P}}
\newcommand{\sR}{{\mathsf R}}
\newcommand{\sG}{{\mathsf G}}
\newcommand{\sU}{{\mathsf U}}
\newcommand{\sF}{{\mathsf F}}
\newcommand{\sE}{{\mathsf E}}

\newcommand{\bA}{\mathbf{A}}
\newcommand{\bB}{\mathbf{B}}
\newcommand{\bC}{\mathbf{C}}
\newcommand{\bH}{\mathbf{H}}
\newcommand{\bV}{\mathbf{V}}
\newcommand{\bL}{\mathbf{L}}

\newcommand{\D}{\mathbb{D}}
\newcommand{\NN}{\mathbb{N}}
\newcommand{\RR}{\mathbb{R}}

\newcommand{\sD}{\mathsf{D}}
\newcommand{\sL}{\mathsf{L}}
\newcommand{\U}{\mathsf{U}}
\newcommand{\SL}{\mathsf{SL}}

\newcommand{\Res}{\mathrm{Res}}

\newcommand{\supp}{\mathrm{supp}}

\newcommand{\bw}{\mathbf{w}}
\newcommand{\bp}{\mathbf{p}}
\newcommand{\bc}{\mathbf{c}}

\newcommand{\ii}{\mathrm{i}}
\newcommand{\e}{\mathrm{e}}
\newcommand{\ee}{\varepsilon}
\newcommand{\eet}{\varepsilon^\cT}
%
% Tmps
%
\def\qed{Q.E.D.}
%
% environments
%
%\numberwithin{equation}{section}
%\renewcommand{\theequation}{\thesection.\arabic{equation}}
%\newtheorem{definition}[theorem]{{\rm \textbf{Definition}}}
%
% commands
%
\newcommand{\norm}[1]{\Vert #1 \Vert}
\newcommand{\normm}[1]{\Vert #1 \Vert}
\newcommand{\ugl}[1]{\left[ #1 \right]}
\newcommand{\sk}[1]{\left( #1 \right)}
\newcommand{\dual}[1]{\left< #1 \right>}
\newcommand{\abs}[1]{\left| #1 \right|}
\newcommand{\abss}[1]{\vert #1 \vert}
\newcommand{\spa}[1]{\mbox{span}\{ #1 \}}
\newcommand{\absf}[2]{\frac{\abss{#1}}{\abss{#2}} }
\newcommand{\ine}[1]{{\mathbf{#1}}}
\newcommand{\mb}[1]{{\mathbf{#1}}}
\newcommand{\conpo}[1]{\stackrel{#1}{\longrightarrow}}
\newcommand{\enorm}[1]{|\!|\!|#1|\!|\!|}
%
% functions
%
\def\imag{{\rm i}}
\def\sinbf{\mathsf{sin}}
\def\cosbf{\mathsf{cos}}
\def\eexp{\text{e}}
\def\region{\mathcal{R}}
\def\slim{\text{{\rm s-lim}}}
\def\wlim{\text{{\rm w-lim}}}
\def\tripleb{\mid\!\mid\!\mid}

\author{Stefano Giani}
\address{ School of Mathematical Sciences
University of Nottingham , University Park, Nottingham, NG7 2RD,  United Kingdom }
\email{stefano.giani@nottingham.ac.uk}

\author{Luka Grubi\v{s}i\'{c}}
\address{University of Zagreb, Department of Mathematics, Bijeni\v{c}ka 30, 10000 Zagreb, Croatia}
\email{luka.grubisic@math.hr}

\author{Jeffrey S. Ovall}
\address{University of Kentucky,
Department of Mathematics, Patterson Office Tower 761, Lexington, KY 40506-0027, USA}
\email{jovall@ms.uky.edu}
\thanks{}

% \title[short text for running head]{full title}
\title[Reliable $hp$-adaptive FEM for eigenvalue/vector problems]{Reliable \textit{a-posteriori} error estimators
for $hp$-adaptive finite element approximations of eigenvalue/eigenvector problems}

%    Only \author and \address are required; other information is
%    optional.  Remove any unused author tags.

%    author one information
% \author[short version for running head]{name for top of paper}

%    author two information

%    \subjclass is required.
\keywords{
eigenvalue problems, finite element methods, a posteriori error
estimates, $hp$-apaptivity
}
\subjclass[2000]{Primary: 65N30, Secondary: 65N25, 65N15}

\date{\today}

\dedicatory{}

%    Abstract is required.
\begin{abstract}
We present reliable a-posteriori error estimates for $hp$-adaptive
  finite element approximations of eigenvalue/eigenvector
  problems. Starting from our earlier work on $h$ adaptive finite
  element approximations we show a way to obtain reliable and
  efficient a-posteriori estimates in the $hp$-setting. At the core of
  our analysis is the reduction of the problem on the analysis of the
  associated boundary value problem. We start from the analysis of
  Wohlmuth and Melenk and combine this with our a-posteriori
  estimation framework to obtain eigenvalue/eigenvector approximation
  bounds.
\end{abstract}
\maketitle

%    Text of article.
\section{Introduction}
\label{Intro}
Accurate computation of eigenvalues and eigenvectors of differential operators defined in planar regions has attracted considerable attention recently. A central paper in this body of work is the 2005 contribution of Trefethen and Betcke~\cite{Betcke2005} on computing eigenpairs for the Laplacian on a variety of planar domains, by the method of particular solutions.  This approach produced highly accurate eigenvalues---correct to $13$ or $14$ digits in some cases---but the approach is limited in its application scope to differential operators whose highest order coefficients are constant and lower order coefficients are analytic, see the discussion from \cite{Eisenstat1974}. In particular this means that handling anisotropic diffusion operators is excluded. For further discussion of recent research in this area see \cite{Betcke2007,Barnett2008,Fairweather2009}.

This limitation excludes many interesting eigenvalue model problems for composite materials, such as those which are of interest for methods of nondestructive sensing (cf. \cite{Ammari2009,Ammari2009a}).  Our interest in problems of this sort is motivated by considerations of photonic crystals and related problems, cf. \cite{Ammari2009b,GiGr2011}. Although such problems are not directly addressed in this work, we do consider examples which have jump discontinuities in the coefficients of the highest and lowest order derivatives and therefore capture
some of the computational difficulties which arise in photonic crystal applications.  In \cite{GiGrOv}, we used an $hp$-adaptive discontinuous Galerkin method, with duality-based (goal-oriented) adaptive refinement, to efficiently produce eigenvalue approximations having  at least $10$ correct digits for several model problems, including those with discontinuous coefficients.

Our experience thus far indicates that such $hp$-DG methods provide the most efficient means of computing eigenvalues in the discontinuous-coefficient case in terms of flops-per-correct-digit.  However, the structure of DG-methods is such that only limited results are available on the accuracy of computed eigen\textit{vector} approximations.  This is, in part, due to the difficulty in choosing an appropriate norm for the analysis.  The analytical framework that we have developed elsewhere for lower-order continuous elements (\cite{Grubisic2009,Bank2010}) uses native Hilbert space norms in an essential way, so standard DG norms appear very difficult to incorporate in this kind of analysis.  

Because it is straight-forward to apply our framework in the analysis of approximations of eigenvectors of low regularity, $H^{1+\alpha}$ for some (small) $\alpha>0$, as well as invariant subspaces corresponding to degenerate eigenvalues (those having multiplicity greater than one), it seems useful to develop a continuous $hp$-adaptive scheme based on this approach.  The aim is that a more robust theory might soon be complemented with computational efficiency which is competitive with its DG counterpart.  The present work is a first significant step in that direction.

The rest of this paper is organized as follows.  In Section~\ref{Model} we introduce our model problem and basic notation related to its continuous and discrete versions, as well as some basic theory related to such eigenvalue problems.  The notion of approximation defects and their relevance is discussed in Section~\ref{ApproxDef}, with results from~\cite{Grubisic2009,Bank2010} extended for use in the present context.  These extensions make possible the incorporation of results from~\cite[Section 3]{Melenk2001}, which pertain to boundary value problem error estimation, to obtain efficient and reliable estimates of eigenvalue approximations, which is discussed in Section~\ref{HP_Adapt}.  Also in this section we present a $\sin\Theta$ type result for the accuracy of eigenvectors---to assess the accuracy of the angle operator we use the Hilbert-Schmidt norm.  Section~\ref{Exper}, which constitutes the bulk of the paper, is devoted to numerical experiments on a variety of different kinds of
problems to assess the practical behavior of the proposed approach.

\iffalse
after introducing model problem, basic theory and In this paper we approach this problem with $hp$-adaptive \textbf{continuous} methods which yield $H^1$ conforming eigenvector approximations. Using the framework from \cite{Grubisic2009,Bank2010} in combination with the results from \cite[Section 3]{Melenk2001} we obtain reliable estimates of eigenvalue approximations (together with multiplicity). Furthermore, we present a $\sin\Theta$ type result for the accuracy of eigenvectors, cf. \cite{Grubisic2007,Grubisic2009}. To assess the accuracy of the angle operator we use the Hilbert-Schmidt norm.

To test our method we have used the eigenvalues computed by our discontinuous Galerkin method from \cite{GiGrOv}. The accuracy which we achieved with the conforming $hp$ method are between $5$ to $8$ correct digits for the eigenvalues. The reference eigenvalues --- which based on our dual weighted residual method are believed to have at least $10$ correct digits --- will always be reported for each experiment.

\fi 

\section{Model Problem and Discretization}
\label{Model}
Let $\Omega\subset\R^2$ be a bounded polygonal domain,
possibly with re-entrant corners, and let $\partial\Omega_D\subset\partial\Omega$ have positive
(1D) Lebesgue measure. We define the space
$\cH=\{v\in H^1(\Omega):\; v_{|_{\partial\Omega_D}}=0\}$, where these
boundary values are understood in the sense of trace.
%$\cH=\{v\in H^1(\Omega):\; v=0 \mbox{ on } \partial\Omega_D
%\mbox{ in the sense of trace}\}$.
We are interested in the eigenvalue problem:
\begin{align}\label{eigprob}
\mbox{Find }(\lambda,\psi)\in\RR\times \cH\mbox{ so that }
B(\psi,v)=\lambda (\psi,v)\mbox{ and } \psi\ne0\mbox{ for all } v\in\cH~.
\end{align}
Here we have assumed
\begin{equation}\label{bil_form}
B(w,v)=\int_\Omega A\nabla w\cdot\nabla v+c wv\,dx,
\end{equation} and
\begin{equation}\label{scalar_product}
(w,v)=\int_\Omega wv\,dx
\end{equation}
is the standard $L^2$ inner-product.  We will also assume that the
diffusion matrix $A$ is piecewise constant and uniformly positive
definite a.e., the scalar $c$ is also piecewise constant and
non-negative.  These assumptions are sufficient to guarantee that
there are constants $\beta_0,\beta_1>0$ such that $B(v,w)\leq
\beta_1\|v\|_1\|w\|_1$ and $B[v]\doteq B(v,v)\geq \beta_0\|v\|_1^2$ for
all $v,w\in\cH$.  In other words $B(\cdot,\cdot)$ is an inner product
on $\cH$, whose induced ``energy''-norm $\enorm{\cdot}$ is equivalent
to $\|\cdot\|_1$.  The numbers $\beta_0$ and $\beta_1$ are referred to, respectively,
as the coercivity and continuity constants for $B$.

Here and elsewhere, we use the following standard notation for norms
and seminorms: for $k\in\N$ and $S\subset\Omega$ we denote the standard norms
and semi-norms on the Hilbert spaces $H^k(S)$ by
\begin{align}\label{norms}
&\norm{v}_{k,S}^2=\sum_{|\alpha|\leq k}\norm{D^\alpha v}^2_{S}
&|v|_{k,S}^2=\sum_{|\alpha|= k}\norm{D^\alpha v}^2_{S}~,
\end{align}
where $\|\cdot\|_{S}$ denotes the $L^2$ norm on $S$.
Additionally, we define $\enorm{\cdot}_S$ by
\begin{align}\label{enorms}
\enorm{v}_{S}^2=\int_S A\nabla v\cdot\nabla v+c v^2\,dx~,
\end{align}
recognizing that this may be a semi-norm.
When $S=\Omega$ we omit it from the subscript.
Our assumptions on
$A$ and $c$ guarantee that there are local constants $\beta_{0S},\gamma_{1S}>0$
such that $\beta_{0S}|v|_{1,s}^2\leq\enorm{v}_S^2\leq \beta_{1S}\|v\|_{1,s}^2$, and
the seminorm in the lower bound can be replaced with the full norm
(after modifying $\beta_{0S}$ if necessary) if $c(x)\geq c_S>0$ on $S$.
%For other real $s\in(0, 1]$ we
%also use $H^{1+s}(\Omega)$ to denote the standard interpolation
%spaces.

\subsection{Notational conventions for eigenvalues and eigenvectors}
%{\sf This is the work in progress. I have collected here the terms which I have used in the
%subsequent sections. Could change as the presentation changes elsewhere.}
The variational eigenvalue problem
(\ref{eigprob})--(\ref{scalar_product}) is attained by the positive
sequence of eigenvalues
\begin{equation}\label{enum}0<\lambda_1\leq\lambda_2\leq\cdots\leq\lambda_q\leq\cdots
\end{equation}
and the sequence of eigenvectors $(\psi_i)_{i\in\N}$ such that
\begin{align}\label{spec_prob}
  B(\psi_i, v)&=\lambda_i(\psi_i, v),\qquad\forall v\in\H,\qquad
  \text{and }(\psi_i, \psi_j)=\delta_{ij}~.
%\\\text{and }\psi_i\in\H,\; &\|\psi_i\|=1,\;\;(\psi_i,
%\psi_j)=0,\;i\ne j.
\end{align}
Here we have counted the eigenvalues according to their multiplicity
and we will also use the notation $\psi_i\perp \psi_j$ when $(\psi_i,
\psi_j)=0$ (when $i\neq j$).
% to denote the fact that $(\psi_i, \psi_j)=0$.
Furthermore, the sequence $(\lambda_i)_{i\in\N}$ has no finite
accumulation point; and due to the Peron-Frobenius theorem we know
that, in the case in which $\Omega$ is a path-wise connected domain,
the inequality $\lambda_1<\lambda_2$ holds and the eigenvector
$\psi_1$ can be chosen so that $\psi_1$ is continuous and $\psi_1>0$
holds pointwise in $\Omega$. We will also use the notation
$$
\Spec_B:=\{\lambda_i~:~i\in\N\}
$$
to denote the spectrum of the variational eigenvalue problem (\ref{spec_prob}) and we use
$$
\mathfrak{M}(\lambda):=\text{span}\{\psi~:~\|\psi\|=1, \text{ and }\; B(\psi, \phi)=\lambda(\psi,\phi),\quad\forall\phi\in\cH\}
$$
to denote the spectral subspace associated to $\lambda\in\Spec_B$. For variational eigenvalue problems like (\ref{bil_form}) and
(\ref{spec_prob}) the subspaces $\mathfrak{M}(\lambda)$, $\lambda\in\Spec_B$ are finite dimensional.
Furthermore, let $E_\lambda$ be the $L^2$ orthogonal projection onto $\mathfrak{M}(\lambda)$ then
\begin{align*}
\sum_{\lambda\in\Spec_B}E_\lambda=I
\end{align*}
and the spaces $\mathfrak{M}(\lambda)=\Ran E_\lambda$ and $\mathfrak{M}(\mu)=\Ran E_\mu$ are
mutually orthogonal for $\lambda,\mu\in\Spec_B$ and $\lambda\ne\mu$.

Finally, note that
$$
B(\psi, \phi)=\sum_{\lambda\in\spec(\mH)}\lambda(\psi, E_\lambda\phi),\qquad \psi,\phi\in\H
$$
and so we obtain an alternative representation of the energy norm
\begin{equation}\label{dichotomy}
\enorm{\psi}^2=B(\psi, \psi)=\sum_{\lambda\in\spec(\mH)}\lambda(\psi, E_\lambda\psi),\quad\psi\in\cH.
\end{equation}

\subsection{Discrete eigenvalue/eigenvector approximations}
We discretize~\eqref{eigprob} using $hp$-finite element spaces, which
we now briefly describe.  Let $\cT=\cT_h$ be a triangulation of
$\Omega$ with the piecewise-constant mesh function $h:\cT_h\to(0,1)$,
$h(K)=\text{diam}(K)$ for $K\in\cT_h$.  Throughout we
implicitly assume that the mesh is aligned with all discontinuities of
the data $A$ and $c$, as well as any locations where the (homogeneous)
boundary conditions change between Dirichlet and Neumann.  Given a
piecewise-constant distribution of polynomial degrees, $p:\cT_h\to\N$,
we define the space
\begin{align*}
V=V_h^p=\{v\in\cH\cap C(\overline\Omega):\;
v\big|_K\in\PP_{p(K)}\mbox{ for each }K\in\cT_h\}~,
\end{align*}
where $\PP_j$ is the collection of polynomials of total degree no
greater than $j$ on a given set.  Suppressing the mesh parameter $h$
for convenience, we also define the set of edges $\cE$ in $\cT$, and
distinguish interior edges $\cE_I$, and edges on the Neumann boundary $\cE_N$
(if there are any).  Additionally, we let $\cT(e)$ denote the one or
two triangles having $e\in\cE$ as an edge, and we extend $p$ to $\cE$ by
$p(e)=\max_{K\in\cT(e)}p(K)$.  As is standard, we assume that the
family of spaces satisfy the following regularity properties on
$\cT_h$ and $p$:
There is a constant $\gamma>0$ for which
\begin{enumerate}
%\item[(C1)] $\gamma^{-1}h(K)\leq h(K')\leq\gamma h(K)$,
%for adjacent $K,K'\in\cT$,
%$\overline{K}\cap\overline{K'}\ne\emptyset$~,
\item[(C1)] $\gamma^{-1}[h(K)]^2\leq \mbox{area}(K)$
for  $K\in\cT$,
\item[(C2)] $\gamma^{-1}(p(K)+1)\leq p(K')+1\leq\gamma(p(K)+1)$
for adjacent $K,K'\in\cT$, $\overline{K}\cap\overline{K'}\ne\emptyset$.
\end{enumerate}
It is really just a matter of notational convenience that a single
constant $\gamma$ is used for all of these upper and lower bounds.
The shape regularity assumption (C1) implies that the diameters of
adjacent elements are comparable.

In what follows we consider the discrete versions of~\eqref{eigprob}:
\begin{align}\label{disceigprob}
\mbox{Find } (\hat\lambda,\hat\psi)\in\RR\times V
\mbox{ such that } B(\hat\psi,v)=\hat\lambda (\hat\psi,v)\mbox{ for all } v\in V~.
\end{align}
We also assume, without further comment, that the solutions are
ordered and indexed as in \eqref{enum}, with
$(\hat\psi_i,\hat\psi_j)=\delta_{ij}$.  We are interested in assessing
approximation errors in collections of computed eigenvalues and
associated invariant subspaces.
Let $s_m=\{\mu_k\}_{k=1}^m\subset(a,b)$ be the set of all eigenvalues
of $B$, counting multiplicities, in the interval $(a,b)$, and let
$S_m=\mathrm{span}\{\phi_k\}_{k=1}^m$ be the associated invariant
subspace, with $(\phi_i,\phi_j)=\delta_{ij}$.  The discrete
problem~\eqref{disceigprob}
is used to compute corresponding approximations
$\hat{s}_m=\{\hat\mu_k\}_{k=1}^m$ and
$\hat{S}_m=\mathrm{span}\{\hat\phi_k\}_{k=1}^m$, with
$(\hat\phi_i,\hat\phi_j)=\delta_{ij}$.
\begin{remark}
When $s_m$
consists of the smallest $m$ eigenvalues, we use the absolute
labelling $s_m=\{\lambda_k\}_{k=1}^m$ and
$S_m=\mathrm{span}\{\psi_k\}_{k=1}^m$ instead of the relative
labelling involving $(\mu_k,\phi_k)$; and the analogous statement
holds for the discrete approximations $\hat{s}_m$ and $\hat{S}_m$.
This distinction is used in
some of our results, such as Theorems~\ref{tm:ess} and~\ref{thm:asim2}.
\end{remark}

%\begin{remark} {\bf Luka: this needs your revision/correction. Does it
%  even belong here?}
%The operator theoretic arguments in Section(s)???? assume a family of
%spaces $V_\nu$, together with
%the $(L^2)$ orthogonal projections $P_\nu$, $\Ran(P_\nu)=V_\nu$, which
%converge strongly to the identity operator (on $\cH$)
%$P_{\nu}\to\I$ as $\nu\to0$.
%\end{remark} 

\section{Approximation Defects}
\label{ApproxDef}
\subsection{Approximation defects}
Let the finite element space $V\subset \cH$ be given and let
$\hat{s}_m$ and $\hat{S}_m$ be the approximations which are computed
from $V$. We define the corresponding \textit{approximation defects}
%in $\hat{s}_m,\hat{S}_m$
as:
\begin{align}\label{appdefect1.1}
  \eta_{i}^2(\hat{S}_m)=\max_{ \substack{\mathcal{S}\subset
      \hat{S}_{m}\\ \dim\mathcal{S}=m-i+1}}
  \min_{\substack{f\in\mathcal{S}\\f\neq 0}}
  \frac{\enorm{u(f)-\hat{u}(f)}^2}{\enorm{u(f)}^2} ~,
%=\max_{
%  \substack{\mathcal{S}\subset \hat{S}_{m}\\ \dim\mathcal{S}=m-i+1}}
%  \min_{\substack{f\in\mathcal{S}\\f\neq 0}}
%  \frac{\enorm{u(f)-\hat{u}(f)}^2}{\enorm{u(f)-\hat{u}(f)}^2+\enorm{\hat{u}(f)}^2}
%~,
\end{align}
where $u(f)$ and $\hat{u}(f)$ satisfy the boundary value problems:
\begin{align}\label{sol:Hsp}
B(u(f),v)&=(f,v)\mbox{ for every }v\in\cH\\
B(\hat{u}(f),v)&=(f,v)\mbox{ for every }v\in V~.\label{sol:Vsp}
\end{align}
In Theorems~\ref{tm:ess} and~\ref{thm:asim2} below, we state key
theorems from~\cite{Grubisic2009,Bank2010}, which show that these
approximation defects would yield ideal error estimates for eigenvalue
and eigenvector computation \textbf{if they could be computed}.  This
motivates the use of \textit{a posteriori} error estimation techniques
for boundary value problems to efficiently and reliably estimate
approximation defects.  In~\cite{Grubisic2009,Bank2010}, we used
hierarchical bases to estimate the approximation defects when $V$ was
the space of continuous, piecewise affine functions.  In Section
\ref{HP_Adapt} we show how to utilize the theory of residual based
estimates for $hp$-finite elements from \cite{Melenk2001} in a similar
fashion.

The following result concerns approximations $\hat{s}_{\textsc{m}}$ and
$\hat{S}_{\textsc{m}}$ of the (complete) lower part of the spectrum. This is the reason why
we have capitalized the dimension parameter $\textsc{m}\in\N$, which is associated to the cluster of lowermost eigenvalues.
As opposed to a given cluster of eigenvalues contained in the interval $\big(a, b\big)$.

\begin{theorem}\label{tm:ess}
  Assume that $\lambda_{\textsc{m}}<\lambda_{\textsc{m}+1}$, and
  let $\hat{S}_\textsc{m}$ be the span of first $\textsc{m}\in\N$ eigenvectors of~\eqref{disceigprob}.  If
%in addition
  $\hat{S}_\textsc{m}={\rm
    span}\{\hat\psi_1,\cdots,\hat\psi_\textsc{M}\}$ is such that
  $\frac{\eta_\textsc{m}(\hat{S}_\textsc{m})}{1-\eta_\textsc{m}(\hat{S}_\textsc{m})}<
  \frac{\lambda_{\textsc{m}+1}-\hat\lambda_\textsc{m}}{\lambda_{\textsc{m}+1}+\hat\lambda_\textsc{m}}$
  then
\begin{equation}\label{rely}
  \frac{\hat\lambda_1}{2\hat\lambda_\textsc{m}}
\sum_{i=1}^\textsc{m}\eta_i^2(\hat{S}_\textsc{m})
  \leq\sum^\textsc{m}_{i=1}\!
  \frac{\hat\lambda_i-\lambda_i}
  {\hat\lambda_i}\leq
    C_{\textsc{M}} \sum_{i=1}^\textsc{m}\eta_i^2(\hat{S}_\textsc{m}).
  \end{equation}The constant $C_{\text{M}}$ depends solely on the relative
  distance to the unwanted component of the spectrum
  (e.g. $\frac{\lambda_{\textsc{M}}-\lambda_{\textsc{M}+1}}{
    \lambda_{\textsc{M}}+\lambda_{\textsc{M}+1}}$).
\end{theorem}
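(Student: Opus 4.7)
The plan is to recast the approximation defects $\eta_i^2(\hat{S}_\textsc{m})$ as generalized eigenvalues of an operator pencil on $\hat{S}_\textsc{m}$ and then use the trace identity (sum of generalized eigenvalues equals trace) together with the spectral-gap hypothesis to compare their sum with $\sum_{i=1}^\textsc{m}(\hat\lambda_i-\lambda_i)/\hat\lambda_i$. First I would introduce the continuous and Galerkin solution operators $T,\hat{T}\colon L^2(\Omega)\to\cH$ by $Tf=u(f)$ and $\hat{T}f=\hat{u}(f)$; both are self-adjoint on $L^2$, and $\hat{T}$ vanishes on $V^{\perp_{L^2}}$. For $f,g\in V$, Galerkin orthogonality combined with the symmetry of $B$ gives
\[
B(u(f)-\hat{u}(f),u(g)-\hat{u}(g)) = (f,(T-\hat{T})g),\qquad B(u(f),u(g)) = (f,Tg),
\]
so the ratio in \eqref{appdefect1.1} is a generalized Rayleigh quotient on $\hat{S}_\textsc{m}$. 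The min--max characterization then identifies $\eta_1^2(\hat{S}_\textsc{m}),\ldots,\eta_\textsc{m}^2(\hat{S}_\textsc{m})$ as the full list of generalized eigenvalues of the pencil $(T-\hat{T}, T)$ restricted to $\hat{S}_\textsc{m}$ and taken with respect to the $L^2$ inner product.

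Next I would compute matrix representations in the $L^2$-orthonormal basis $\{\hat\psi_i\}_{i=1}^\textsc{m}$ of $\hat{S}_\textsc{m}$. Expanding $\hat\psi_i = \sum_k c_{ik}\psi_k$ with $c_{ik}=(\hat\psi_i,\psi_k)$, the relations $\sum_k c_{ik}^2 = 1$ and $\hat\lambda_i = \sum_k c_{ik}^2\lambda_k$ yield
\[
\hat\lambda_i - \lambda_i = \sum_k c_{ik}^2(\lambda_k - \lambda_i),\qquad M_{ij} := (T\hat\psi_i,\hat\psi_j) = \sum_k \frac{c_{ik}c_{jk}}{\lambda_k},\qquad (\hat{T}\hat\psi_i,\hat\psi_j) = \frac{\delta_{ij}}{\hat\lambda_j},
\]
so that setting $N_{ij}:=((T-\hat{T})\hat\psi_i,\hat\psi_j)$ one obtains $N_{ii} = \sum_k c_{ik}^2(\hat\lambda_i - \lambda_k)/(\lambda_k\hat\lambda_i)$. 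I would then split the $k$-sums into the wanted range $k\le\textsc{m}$ and the spectral tail $k>\textsc{m}$. The hypothesis $\eta_\textsc{m}/(1-\eta_\textsc{m}) < (\lambda_{\textsc{m}+1}-\hat\lambda_\textsc{m})/(\lambda_{\textsc{m}+1}+\hat\lambda_\textsc{m})$ together with $\lambda_\textsc{m}<\lambda_{\textsc{m}+1}$ provides uniform control of $\sum_{k>\textsc{m}} c_{ik}^2$, so the tail contributions get absorbed with constants depending only on the relative gap.

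Finally, the trace identity yields $\sum_{i=1}^\textsc{m}\eta_i^2(\hat{S}_\textsc{m}) = \tr(M^{-1}N)$, and I would compare this with $\sum_{i=1}^\textsc{m}\hat\lambda_i N_{ii}$; the identity for $N_{ii}$ combined with $\hat\lambda_i-\lambda_i=\sum_k c_{ik}^2(\lambda_k-\lambda_i)$ then delivers a comparison with $\sum_{i=1}^\textsc{m}(\hat\lambda_i-\lambda_i)/\hat\lambda_i$ up to gap-controlled corrections. The hard part will be this trace-perturbation step: $M$ is not diagonal in $\{\hat\psi_i\}$ unless the cluster is trivial, because $M_{ij} = \sum_k c_{ik}c_{jk}/\lambda_k$ is significant precisely when two discrete eigenvectors share substantial spectral content with a common exact eigenvector $\psi_k$. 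Controlling these off-diagonal contributions to $\tr(M^{-1}N)$ while preserving a clean comparison is where the factor $\hat\lambda_1/(2\hat\lambda_\textsc{m})$ (a worst-case ratio inside the cluster) has to be paid on the lower bound, and it is also why $C_\textsc{m}$ on the upper bound is forced to depend on the relative distance to the unwanted part of the spectrum.
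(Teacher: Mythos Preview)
The paper does not prove Theorem~\ref{tm:ess}; it is quoted from~\cite{Grubisic2009,Bank2010} (the sentence just before the theorem reads ``we state key theorems from~\cite{Grubisic2009,Bank2010}''), and afterward only a pointer to those references is given. What the paper \emph{does} develop, in Section~\ref{ApproxDef}, is the matrix--pencil framework you have rediscovered: your identities $B(u(f)-\hat u(f),u(g)-\hat u(g))=(f,(T-\hat T)g)$ and $B(u(f),u(g))=(f,Tg)$ make your matrices $N,M$ coincide with the paper's $E,G$ in~\eqref{Gram}, and the paper explicitly records (citing~\cite{Grubisic2009}) that $\eta_i^2(\hat S_\textsc{m})=\lambda_i(E,G)$. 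So your reduction of the $\eta_i^2$ to generalized eigenvalues and the use of the trace identity are exactly in line with the intended approach, and your intermediate computations ($M_{ij}=\sum_k c_{ik}c_{jk}/\lambda_k$, $N_{ii}=\sum_k c_{ik}^2(\hat\lambda_i-\lambda_k)/(\lambda_k\hat\lambda_i)$, etc.) are correct.

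Where your proposal stops short is precisely where you say it does: you name the trace--perturbation step as ``the hard part'' but do not execute it. Splitting the $k$-sums at $\textsc{m}$ and invoking the gap hypothesis is the right idea, but you have not shown how to pass from $\tr(G^{-1}E)$ to $\sum_i(\hat\lambda_i-\lambda_i)/\hat\lambda_i$ with the stated constants on either side. For the closely related Lemma~\ref{trace_est} the paper handles the non-diagonality of $G$ not by tracking off-diagonal entries of $G^{-1}$ as you suggest, but via the operator sandwich $D_\mu\le G\le(1+\fD_l)D_\mu$ with $D_\mu=\diag(\hat\mu_i^{-1})$, which gives two-sided trace bounds in one stroke; adapting that device would likely be cleaner than the route you sketch. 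For the full proof of~\eqref{rely} with the specific constants $\hat\lambda_1/(2\hat\lambda_\textsc{m})$ and $C_\textsc{m}$, however, the paper offers nothing beyond the citations, so a comparison with ``the paper's own proof'' is not really possible here.
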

The constant $C_{\textsc{M}}$ is given by an explicit formula
which is a reasonable practical overestimate, see \cite{Grubisic2009,Bank2010}
for details.  A similar results holds for the eigenvectors.  We point
the interested reader to \cite[Theorem 4.1 and equation
(3.10)]{Grubisic2009} and \cite[Theorem 3.10]{Bank2010}.
\begin{remark}\label{loweigdegen}
  Although $\lambda_1<\lambda_2$ for the particular problems we
  consider numerically in the present work, much of the theory carries
  over to problems where $\Omega$ is not pathwise connected, or the
  boundary conditions are periodic (as examples).  In these cases the
  Peron-Frobenius theorem does not apply, and it is quite possible
  that the smallest eigenvalue is degenerate.  If this is the case,
  and $\lambda_1=\lambda_\textsc{m}$, then the constant
  ${\hat\lambda_1}/{2\hat\lambda_\textsc{m}}$ in~\eqref{rely} can be
  replaced by $1$.
\end{remark}

An important feature of these ideal estimates is that they are
asymptotically exact, both as eigenvector as well as as eigenvalue
estimators, as the following theorem indicates in the case of a single
degenerate eigenvalue and its corresponding invariant subspace.
\begin{theorem}\label{thm:asim2}
Let $\lambda_q$ be a degenerate eigenvalue of multiplicty $m$,
$\lambda_{q-1}<\lambda_{q}=\lambda_{q+m-1}<\lambda_{q+m}$.
Let $\hat{S}_m=\hat{S}_m(\cT)=\mathrm{span}(\hat\phi_k)\subset V=V(\cT)$
%$(\hat\mu_k,\hat\psi_k)\in\RR\times V$ are the computed appro
 be the computed approximation of the
invariant subspace corresponding to $\lambda_q$.
Then, taking the pairing of eigenvectors $\phi_i$ and
Ritz vectors $\hat\phi_i$ as in~\cite{Grubisic2009}, we have
\begin{align}\label{eq:asympt_1}
  \lim_{h\to
    0}\frac{\sum_{i=1}^{m}\frac{|\hat\mu_i-\lambda_q|}{\hat\mu_i}}{
\sum_{i=1}^{m}\eta_i^2(\hat{S}_m)}=1\quad,\quad
    \lim_{h\to
    0}\frac{\sum_{i=1}^{m}\frac{B[ \hat\phi_i-
    \phi_{i}]}{B[ \phi_{i}]}}
    {\sum_{i=1}^{m}\eta_i^2(\hat{S}_m)}=1~.
\end{align}
\end{theorem}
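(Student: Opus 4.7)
The plan is to reduce the approximation defects to explicit boundary-value-problem errors that can be computed from the Ritz data, then perform a spectral expansion that isolates leading-order terms matching the eigenvalue error and the energy-norm eigenvector error.

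First I would exploit the Ritz structure of $\hat\phi_i$. Since $B(\hat\phi_i,v)=\hat\mu_i(\hat\phi_i,v)$ for all $v\in V$, the discrete boundary value problem~\eqref{sol:Vsp} with data $f=\hat\phi_i$ is solved explicitly by $\hat{u}(\hat\phi_i)=\hat\phi_i/\hat\mu_i$, so $\enorm{\hat{u}(\hat\phi_i)}^2=1/\hat\mu_i$. Galerkin orthogonality then gives
\begin{equation*}
\enorm{u(\hat\phi_i)-\hat{u}(\hat\phi_i)}^2
=\enorm{u(\hat\phi_i)}^2-\frac{1}{\hat\mu_i}
=\sum_{\lambda\in\Spec_B}\frac{\|E_\lambda\hat\phi_i\|^2}{\lambda}-\frac{1}{\hat\mu_i},
\end{equation*}
using the spectral representation of the continuous solution operator $f\mapsto u(f)$. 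Combined with $\hat\mu_i=\sum_\lambda\lambda\,\|E_\lambda\hat\phi_i\|^2$, this rewrites the BVP error purely in terms of the spectral weights $\|E_\lambda\hat\phi_i\|^2$ and the Ritz value.

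Next I would decompose $\hat\phi_i=P_i+R_i$ with $P_i=E_{\lambda_q}\hat\phi_i\in\mathfrak{M}(\lambda_q)$ and $R_i=(I-E_{\lambda_q})\hat\phi_i$. Standard invariant-subspace perturbation (and the assumption $\lambda_{q-1}<\lambda_q<\lambda_{q+m}$) gives $\|R_i\|\to 0$ and $\hat\mu_i\to\lambda_q$ as $h\to 0$, with $\|R_i\|^2$ and $|\hat\mu_i-\lambda_q|$ of the same asymptotic order. Substituting $\|P_i\|^2=1-\|R_i\|^2$ into the identity above and using $\hat\mu_i-\lambda_q=\sum_{\lambda\ne\lambda_q}(\lambda-\lambda_q)\|E_\lambda\hat\phi_i\|^2$ produces the expansion
\begin{equation*}
\enorm{u(\hat\phi_i)-\hat{u}(\hat\phi_i)}^2
=\frac{\hat\mu_i-\lambda_q}{\lambda_q\hat\mu_i}+\text{(terms of order }\|R_i\|^2\text{ times a spectral gap factor)},
\end{equation*}
and after normalizing by $\enorm{u(\hat\phi_i)}^2\to 1/\lambda_q$ one obtains the leading-order match with the relative eigenvalue error $(\hat\mu_i-\lambda_q)/\hat\mu_i$. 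The $\sin\Theta$ identity from \cite{Grubisic2009} (relating $\enorm{\hat\phi_i-\phi_i}^2/B[\phi_i]$ to the same unwanted-component weights) then yields the eigenvector version.

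The main obstacle, and the point requiring the most care, is converting the max–min in~\eqref{appdefect1.1} into a basis-independent trace on $\hat{S}_m$. The defects $\eta_i^2(\hat{S}_m)$ are Courant--Fischer values of the positive operator on $\hat{S}_m$ whose quadratic form is $f\mapsto\enorm{u(f)-\hat u(f)}^2/\enorm{u(f)}^2$, so $\sum_{i=1}^m\eta_i^2$ equals its trace; one must show this trace can be evaluated on \emph{any} orthonormal (in the appropriate weighted inner product) basis of $\hat{S}_m$, and in particular on the Ritz basis $\{\hat\phi_i\}$ up to lower-order terms. Coupled with this is the degenerate pairing: the limits~\eqref{eq:asympt_1} are only meaningful after the invariant-subspace pairing of $\phi_i$ with $\hat\phi_i$ from \cite{Grubisic2009} is invoked, which diagonalizes the relevant angle operator on $\mathfrak{M}(\lambda_q)$ and aligns the two sides of each ratio term by term. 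Once these bookkeeping issues are handled, the asymptotic exactness follows from the spectral gap and the decay $\|R_i\|\to 0$ established above.
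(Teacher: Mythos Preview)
The paper does not actually prove Theorem~\ref{thm:asim2}; it is imported from~\cite{Grubisic2009,Bank2010} and stated without argument, so there is no in-paper proof to compare your sketch against.

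That said, your outline assembles the correct ingredients---the identity $\hat u(\hat\phi_i)=\hat\phi_i/\hat\mu_i$, Galerkin--Pythagoras, the spectral resolution of $u(\cdot)$, and the realization of $\sum_i\eta_i^2(\hat S_m)$ as a trace via the matrices $E,G$ of Section~\ref{ApproxDef}---and these are indeed the ones used in~\cite{Grubisic2009}. Where your sketch is genuinely incomplete is the step in which you declare the remainder after extracting $(\hat\mu_i-\lambda_q)/(\lambda_q\hat\mu_i)$ to be ``higher order''. Carrying your own computation one line further gives
\[
\enorm{u(\hat\phi_i)-\hat u(\hat\phi_i)}^2
=\frac{1}{\lambda_q\hat\mu_i}\sum_{\lambda\ne\lambda_q}\frac{(\lambda-\lambda_q)(\lambda-\hat\mu_i)}{\lambda}\,\|E_\lambda\hat\phi_i\|^2,
\]
whereas $(\hat\mu_i-\lambda_q)/\hat\mu_i=\hat\mu_i^{-1}\sum_{\lambda\ne\lambda_q}(\lambda-\lambda_q)\|E_\lambda\hat\phi_i\|^2$. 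Both quantities are $O(\|R_i\|^2)$; the piece you discard is of the \emph{same} order as the piece you keep, and for $\lambda<\lambda_q$ the two weighted sums even have opposite signs termwise. The limit~\eqref{eq:asympt_1} therefore does not follow from a single-index expansion: it requires the full trace identity $\sum_i\eta_i^2=\tr(G^{-1}E)$ together with the specific pairing of $\phi_i$ and $\hat\phi_i$ from~\cite{Grubisic2009}, which simultaneously controls the off-diagonal structure of $G$ and aligns the eigenvector energy errors with the defect eigenvalues. You correctly flag this as ``the main obstacle'', but it is not merely bookkeeping---it is where the proof actually lives.
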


\subsection{A relationship with the residual estimates for a Ritz vector basis}

This section addresses the issue of the computability of
$\eta_i(\hat{S}_m)$ by relating these quantities to the
standard dual energy norm estimates of the residuals associated to the Ritz
vector basis of $\hat{S}_m$.

In our notation the energy
norm was denoted by $\enorm{\cdot}$ and we use $u(\cdot)$ and $\hat u(\cdot)$ to denote the solution operators
from (\ref{sol:Hsp}) and (\ref{sol:Vsp}). We assume $\hat \phi_1,
\ldots, \hat \phi_m$ are the Ritz vectors from $\hat{S}_m$, then for
$i,j =1, \ldots, m$, we define the matrices
\begin{align}\label{Gram}
E_{ij}&=B\big(u(\hat\phi_i)-\hat u(\hat\phi_i), u(\hat\phi_j)-\hat u(\hat\phi_j)\big)\\
 G_{ij}&=B\big(u(\hat\phi_i), u(\hat\phi_j)\big).
\end{align}

These matrices were introduced in \cite{Grubisic2009} under the name of the error and the gradient matrix. It was shown in \cite{Grubisic2009} that
$\eta_i(\hat S_m)=\lambda_i(E, G)$, where $\lambda_1(E, G)\leq\cdots\leq\lambda_m(E, G)$ are the eigenvalues of
the generalized eigenproblem for the matrix pair $(E, G)$.

We further assume that $\hat\phi_i$, $i=1,\cdots,m$ are among the Ritz vectors from the finite element subspace $V$, $V\supset\hat S_m$ from (\ref{sol:Vsp}).
The identity (\ref{Gram}) implies that $E$ is a Gram matrix for the set of vectors $u(\hat\phi_i)-\hat u(\hat\phi_i)$, $i=1,\ldots, m$. If
we assume that $\hat{S}_m$ does not contain any eigenvectors, then we conclude that $E$ must be positive definite
matrix. Furthermore, it always holds
\begin{align}
\nonumber \eta_i^2(\hat{S}_m) &=\lambda_i( G^{-1/2}E G^{-1/2})\\\label{diagonal}
E_{ii}&=\mu_i^{-2}\enorm{u(\hat\mu_i\hat\phi_i)-\hat u(\hat\mu_i\hat\phi_i)}^2,\quad i=1, \ldots, m\\
\nonumber D_\mu&\leq G\leq(1+\fD_l)D_\mu,
\end{align}
where $D_\mu=\diag(\hat\mu_1^{-1}, \ldots, \hat\mu_m^{-1})$ and $
\fD_l=\|D_\mu^{-1/2}( G-D_\mu)D_\mu^{-1/2}\|$. Let us note that $\fD_l$ is the relative estimate, so it is expected that even for very crude
finite element spaces $V$ we have $\fD_l<1$.

 Now compute
$$ \sum_{i=1}^m\lambda_i(D_\mu^{-1/2}E
D_\mu^{-1/2})=\tr(D_\mu^{-1/2}E D_\mu^{-1/2})=\sum_{i=1}^mE_{ii}\hat\mu_i,
$$
and so conclude that
\begin{align}\label{eq:preDiag}
\frac{1}{1+\fD_l}\sum_{i=1}^m
E_{ii}\hat\mu_i\leq\sum_{i=1}^m\eta^2_i(\hat{S}_m) &\leq\sum_{i=1}^m
E_{ii}\hat\mu_i
%=\sum_{i=1}^m \hat\mu_i\enorm{u(\hat\phi_i)-\hat  u(\hat\phi_i)}^2
%=\sum_{i=1}^m \hat\mu_i^{-1}\enorm{u(\hat\mu_i\hat\phi_i)-\hat
%  u(\hat\mu_i\hat\phi_i)}^2
~.
%\label{eq:preDiagprac}
\end{align}
We summarize this considerations --- using the identity (\ref{diagonal}) --- in the following lemma.
\begin{lemma}\label{trace_est}  It holds that
%\begin{align}\label{eq:preDiagPr2}
%\frac{1}{1+\fD_l}\sum_{i=1}^m \hat\mu_i\enorm{u(\hat\phi_i)-\hat u(\hat\phi_i)}^2\leq\sum_{%i=1}^m\eta^2_i \leq \sum_{i=1}^m \hat\mu_i\enorm{u(\hat\phi_i)-\hat u(\hat\phi_i)}^2
%\end{align}
\begin{align}\label{eq:preDiagPr3}
\frac{1}{1+\fD_l}\sum_{i=1}^m \hat\mu_i^{-1}\enorm{u(\hat\mu_i\hat\phi_i)-\hat u(\hat\mu_i\hat\phi_i)}^2\leq\sum_{i=1}^m\eta^2_i(\hat{S}_m) \leq \sum_{i=1}^m \hat\mu_i^{-1}\enorm{u(\hat\mu_i\hat\phi_i)-\hat u(\hat\mu_i\hat\phi_i)}^2~.
\end{align}
\end{lemma}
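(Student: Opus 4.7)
The plan is to assemble the lemma directly from two ingredients already established in the preceding discussion: the sandwich inequality (\ref{eq:preDiag}) for $\sum_{i=1}^m\eta_i^2(\hat S_m)$ in terms of $\sum_{i=1}^m E_{ii}\hat\mu_i$, and the diagonal identity (\ref{diagonal}) which expresses $E_{ii}$ as the squared energy-norm of the boundary-value problem error. The only genuine content is rewriting these residual quantities in the scale-invariant form $\hat\mu_i^{-1}\enorm{u(\hat\mu_i\hat\phi_i)-\hat u(\hat\mu_i\hat\phi_i)}^2$ that appears in the statement.

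The first step is to record the linearity of the solution operators. The boundary-value problems (\ref{sol:Hsp})–(\ref{sol:Vsp}) defining $u(f)$ and $\hat u(f)$ depend linearly on the right-hand side $f$, so for any scalar $\alpha$ we have $u(\alpha f)=\alpha\,u(f)$ and $\hat u(\alpha f)=\alpha\,\hat u(f)$. Applying this with $\alpha=\hat\mu_i$ and $f=\hat\phi_i$ and using the definition of $E_{ij}$ in (\ref{Gram}) gives
\[
\enorm{u(\hat\mu_i\hat\phi_i)-\hat u(\hat\mu_i\hat\phi_i)}^2
=\hat\mu_i^{2}\,B\bigl(u(\hat\phi_i)-\hat u(\hat\phi_i),\,u(\hat\phi_i)-\hat u(\hat\phi_i)\bigr)
=\hat\mu_i^{2}\,E_{ii}.
\]
Multiplying by $\hat\mu_i^{-1}$ yields the key identity $E_{ii}\hat\mu_i=\hat\mu_i^{-1}\enorm{u(\hat\mu_i\hat\phi_i)-\hat u(\hat\mu_i\hat\phi_i)}^2$, which is exactly (\ref{diagonal}) rewritten in the convenient form.

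The second step is pure substitution: replace $E_{ii}\hat\mu_i$ by this expression on both sides of (\ref{eq:preDiag}). The upper bound in the lemma is immediate, and the lower bound inherits the factor $1/(1+\fD_l)$ unchanged from the corresponding bound in (\ref{eq:preDiag}), which itself came from the two-sided comparison $D_\mu\leq G\leq(1+\fD_l)D_\mu$ established earlier through a trace computation.

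There is no substantive obstacle; the lemma is essentially a repackaging. The single place to exercise care is the bookkeeping of the power of $\hat\mu_i$: the factor $\hat\mu_i^{-2}$ in the raw form of (\ref{diagonal}) combines with the single $\hat\mu_i$ in the product $E_{ii}\hat\mu_i$ to produce the $\hat\mu_i^{-1}$ seen in the lemma's display. Once this is tracked cleanly, nothing further is required.
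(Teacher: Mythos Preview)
Your proposal is correct and follows exactly the paper's approach: the paper presents the lemma as a direct summary of the preceding discussion, combining the sandwich inequality (\ref{eq:preDiag}) with the diagonal identity (\ref{diagonal}), which is precisely what you do. Your explicit verification of (\ref{diagonal}) via linearity of the solution operators is a welcome clarification, but otherwise there is no substantive difference.
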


\section{$hp$-Error Estimation and Adaptivity in the Eigenvalue
  Context}
\label{HP_Adapt}
Using Lemma~\ref{trace_est}, we have reduced the problem of
estimating the approximation defects, and hence the
error in our eigenvalue/eigenvector computations, to that
of estimating error in associated boundary value problems.
In particular, we must estimate $\enorm{u(\hat\mu_i\hat\phi_i)-\hat u(\hat\mu_i\hat\phi_i)}^2$
for each Ritz vector, where $\hat{S}_m=\mbox{span}\{\hat\phi_1,\ldots,\hat\phi_m\}$ is our approximation
of $S_m=\mbox{span}\{\phi_1,\ldots,\phi_m\}$.
We modify key results from~\cite{Melenk2001}, which were stated only for the
Laplacian, to our context.  The identity
$\hat u(\hat\mu_i\hat\phi_i)=\hat\phi_i$, makes our job easier.
We define the element
residuals $R_i$ for $K\in \cT$, and the edge (jump) residuals $r_i$ for $e\in \cE$, by
\begin{align}\label{two_residuals}
{R_i}{|_K}&=\hat\mu_i\hat\phi_i-c \hat\phi_i+\nabla\cdot A\nabla
\hat\phi_i~,\\
{r_i}_{|_e}&=
\begin{cases}
-(A\nabla \hat\phi_i)_{|_K}\cdot\mb{n}_K-(A\nabla\hat\phi_i)_{|_{K'}}\cdot\mb{n}_{K'}&,\, e\in\cE_I\\
-(A\nabla \hat\phi_i)_{|_K}\cdot\mb{n}_K&,\, e\in\cE_N
\end{cases}~.
\end{align}
For interior edges $e\in\cE_I$, $K$ and $K'$ are the two adjacent
elements, having outward unit normals $\mb{n}_K$ and $\mb{n}_{K'}$,
respectively; and for Neumann boundary edges $e\in\cE_N$ (if there are
any), $K$ is the single adjacent element, having outward unit normal
$\mb{n}_K$.  We note that $R$ is a polynomial of degree no greater than
$p(K)$ on $K$, and $r$ is a polynomial of degree no greater than
$p(e)$ on $e$.

Our estimate of $\varepsilon_i^2=\sum_{K\in\cT}\varepsilon_i^2(K)\approx\enorm{u(\hat\mu_i\hat\phi_i)-\hat u(\hat\mu_i\hat\phi_i)}^2$ is computed from local quantities,
\begin{align}\label{vareps_i}
\varepsilon_i^2(K)=\left(\frac{h(K)}{p(K)}\right)^2\|R_i\|_{0,K}^2
+\frac{1}{2}\sum_{e\in \cE_I(K)}\frac{h(e)}{p(e)}\|r_i\|_{0,e}^2+
\sum_{e\in \cE_N(K)}\frac{h(e)}{p(e)}\|r_i\|_{0,e}^2~,
\end{align}
where $\cE_I(K)$ and $\cE_N(K)$ denote the interior edges and Neumann
boundary edges of $K$, respectively.  An inspection the proof
of~\cite[Lemma 3.1]{Melenk2001} (which was stated for the Laplacian)
makes the following assertion clear.
\begin{lemma}\label{reliable} There is a constant $C>0$ depending only on the $hp$-constant $\gamma$ and the coercivity constant $\beta_0$, such that $ \enorm{u(\hat\mu_i\hat\phi_i)-\hat u(\hat\mu_i\hat\phi_i)}^2 \leq C \varepsilon_i^2$.
\end{lemma}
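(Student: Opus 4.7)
Since $\hat\phi_i$ is a Ritz vector, $B(\hat\phi_i,v)=\hat\mu_i(\hat\phi_i,v)$ for all $v\in V$, which is exactly the defining property of $\hat u(\hat\mu_i\hat\phi_i)$; hence $\hat u(\hat\mu_i\hat\phi_i)=\hat\phi_i$. Set $e=u(\hat\mu_i\hat\phi_i)-\hat\phi_i\in\cH$. From \eqref{sol:Hsp} and \eqref{sol:Vsp} we obtain the Galerkin orthogonality $B(e,v_h)=0$ for every $v_h\in V$. Writing $\enorm{e}^2=B(e,e)$ and using $B(u(\hat\mu_i\hat\phi_i),w)=(\hat\mu_i\hat\phi_i,w)$ for all $w\in\cH$, elementwise integration by parts applied to $B(\hat\phi_i,w)$ (which is legitimate since $A$ is piecewise constant and $\hat\phi_i$ is a polynomial on each $K\in\cT$) yields the residual representation
\begin{align*}
B(e,w)=\sum_{K\in\cT}\int_K R_i\,w\,dx+\sum_{e\in\cE_I\cup\cE_N}\int_e r_i\,w\,ds,\qquad w\in\cH.
\end{align*}

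The next step is the standard reliability argument. Choose $w=e/\enorm{e}$ and let $v_h=\Pi_{hp}w\in V$ be the $hp$-Cl\'ement type quasi-interpolant constructed in~\cite{Melenk2001} (Theorem~2.1 there), which, under the regularity assumptions (C1)--(C2) with constant $\gamma$, satisfies for every $K\in\cT$ and every $e\in\cE(K)$ the local estimates
\begin{align*}
\|w-\Pi_{hp}w\|_{0,K}\le C_\gamma\,\frac{h(K)}{p(K)}\,\|w\|_{1,\omega_K},\qquad
\|w-\Pi_{hp}w\|_{0,e}\le C_\gamma\Bigl(\frac{h(e)}{p(e)}\Bigr)^{\!1/2}\|w\|_{1,\omega_e},
\end{align*}
where $\omega_K$, $\omega_e$ are the usual patches of elements around $K$, $e$ and the patches have bounded overlap depending only on $\gamma$. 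These estimates were proved in~\cite{Melenk2001} independently of the PDE and so are directly available in the present setting.

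Subtracting $\Pi_{hp}w$ using Galerkin orthogonality, applying Cauchy--Schwarz elementwise,
\begin{align*}
B(e,w)=\sum_{K}\int_K R_i(w-\Pi_{hp}w)\,dx+\sum_{e}\int_e r_i(w-\Pi_{hp}w)\,ds,
\end{align*}
and inserting the interpolation bounds gives
\begin{align*}
B(e,w)\le C_\gamma\Bigl(\sum_K\varepsilon_i^2(K)\Bigr)^{\!1/2}\|w\|_1,
\end{align*}
after grouping terms according to \eqref{vareps_i} and using the finite overlap of the patches. Finally, coercivity $\beta_0\|w\|_1^2\le\enorm{w}^2=1$ converts $\|w\|_1$ into $\beta_0^{-1/2}$, so that
\begin{align*}
\enorm{e}=B(e,w)\le C\,\varepsilon_i,
\end{align*}
with $C$ depending only on $\gamma$ and $\beta_0$, which is the claimed bound.

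The only nontrivial point is transferring the Melenk--Wohlmuth interpolation machinery, which was formulated for $-\Delta$, to the present operator with a piecewise-constant SPD matrix $A$ and piecewise-constant $c\ge 0$. This transfer is painless: the interpolation estimates in~\cite{Melenk2001} are purely approximation-theoretic and hold on the generic mesh under (C1)--(C2); the PDE data enters only through (i) the definitions of $R_i$, $r_i$, which already carry the factors of $A$ and $c$, and (ii) the single step where $\|w\|_1$ is controlled by $\enorm{w}$, where coercivity with constant $\beta_0$ is used. Thus the main (only) obstacle is really bookkeeping of these constants when one retraces the proof of~\cite[Lemma~3.1]{Melenk2001}.
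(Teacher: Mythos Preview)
Your argument is correct and is precisely the approach the paper indicates: retrace the reliability proof of \cite[Lemma~3.1]{Melenk2001} using the Scott--Zhang/Cl\'ement $hp$-interpolant, note that the interpolation estimates are purely approximation-theoretic (hence depend only on $\gamma$), and invoke coercivity at the final step to pass from $\|\cdot\|_1$ to $\enorm{\cdot}$, which is where $\beta_0$ enters. The paper additionally remarks that the data-oscillation term in \cite{Melenk2001} is absent here because the right-hand side $\hat\mu_i\hat\phi_i$ is already piecewise polynomial, and that they take the parameter $\alpha=0$; your proof implicitly handles both of these points.
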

\noindent
A few remarks are in order concerning the lemma above and how it relates to~\cite[Lemma 3.1]{Melenk2001}.  First, the bound in~\cite[Lemma 3.1]{Melenk2001}
includes an additional term involving the difference between the righthand
side (in our case $\phi_i$) and its projection on $K$ into a space of
polynomials.  This additional term only arises in their result because they
have chosen to use the projection of the righthand side, instead of the
righthand side itself, to define the element residual (here called $R_i)$.
They do this in order to employ certain polynomial inverse estimates, which
hold in our case outright because our righthand sides are piecewise polynomial.
Their result also involves a parameter $\alpha\in[0,1]$, which we have
taken to be $0$.  The result~\cite[Lemma 3.1]{Melenk2001} is based
on Scott-Zhang type quasi-interpolation, which naturally gives rise to
errors measured in $H^1$.  Mimicking their arguments with our indicator,
one would arrive at a result of the form
\[
\enorm{u(\hat\mu_i\hat\phi_i)-\hat u(\hat\mu_i\hat\phi_i)}^2 \leq  \tilde{C} \varepsilon_i
\|u(\hat\mu_i\hat\phi_i)-\hat u(\hat\mu_i\hat\phi_i)\|_1~,
\]
where $\tilde{C}$ depends only on $\gamma$.  The constant in the coercivity bound
$\beta_0\|v\|_1^2\leq\enorm{v}^2$ enters Lemma~\ref{reliable} at this final
stage.
Similarly, a careful reading of the proofs of~\cite[Lemma 3.4 and 3.5]{Melenk2001} show that their efficiency results are readily
extended to elliptic operators of the type considered here.
\begin{lemma}\label{efficiency} For any $\epsilon>0$,
there is a constant $c=c(\epsilon)>0$ depending only on the $hp$-constant $\gamma$ and the global continuity constant $\beta_1$, such that
$\varepsilon_i^2(K) \leq c p_K^{2+2\epsilon}\enorm{u(\hat\mu_i\hat\phi_i)-\hat u(\hat\mu_i\hat\phi_i)}_{\omega_K}^2$.
\end{lemma}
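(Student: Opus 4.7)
The plan is to follow the bubble-function efficiency proof of Melenk--Wohlmuth~\cite{Melenk2001}, and simply verify that the modifications needed to pass from the Laplacian $-\Delta$ to the operator $Lw\doteq-\nabla\cdot A\nabla w + cw$ (with piecewise-constant $A$ and $c$ aligned with $\cT$) are innocuous. The essential preliminary observation is that, writing $u=u(\hat\mu_i\hat\phi_i)$ and using $\hat u(\hat\mu_i\hat\phi_i)=\hat\phi_i$, we have on every element $K$ and every interior edge $e$ the pointwise identities
\[
R_i = -\nabla\cdot A\nabla(u-\hat u)+c(u-\hat u)\quad\text{on }K,\qquad r_i = \bigl[A\nabla(u-\hat u)\cdot\mathbf{n}\bigr]\quad\text{on }e,
\]
and analogously with a one-sided trace on Neumann edges. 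These hold because $A,c$ are constant on each $K$ and $\hat\phi_i$ is a polynomial there, so the element residual $R_i$ and jump residual $r_i$ are still polynomials of the stated degrees.

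For the element contribution I would set $v_K=R_i\Psi_K$ (extended by zero to $\Omega$), where $\Psi_K$ is the $p$-dependent element bubble of~\cite[Sect.~2.2]{Melenk2001}, which satisfies, for a given $\epsilon>0$ and constants depending only on $\gamma$ and $\epsilon$,
\[
\|R_i\|_{0,K}^2 \;\leq\; C_1(\epsilon)\,p(K)^{1+\epsilon}\!\int_K R_i^2\,\Psi_K\,dx,\qquad \|v_K\|_{0,K}\leq \|R_i\|_{0,K},\qquad \|\nabla v_K\|_{0,K}\leq C_2\,\frac{p(K)}{h(K)}\|R_i\|_{0,K}.
\]
Using the pointwise identity for $R_i$, an integration by parts on $K$ gives $\int_K R_i v_K\,dx = B_K(u-\hat u,v_K)$, where $B_K$ is $B$ restricted to $K$. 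Applying Cauchy--Schwarz with the local $\enorm{\cdot}_K$-norm, then the global continuity bound $\enorm{v_K}_K\leq\sqrt{\beta_1}\,\|v_K\|_{1,K}$, and finally the above inverse estimate, yields
\[
(h(K)/p(K))^2\,\|R_i\|_{0,K}^2 \;\leq\; C(\epsilon)\,\beta_1\,p(K)^{2+2\epsilon}\,\enorm{u-\hat u}_K^2,
\]
which is the desired bound on the first term of $\varepsilon_i^2(K)$.

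The edge contribution is handled analogously with the $p$-dependent edge bubble $\Psi_e$. On an interior edge $e\in\cE_I(K)$ with patch $\omega_e=K\cup K'$, I would take $v_e$ to be the polynomial extension to $\omega_e$ of $r_i\Psi_e$, which satisfies
\[
\|r_i\|_{0,e}^2\leq C_3(\epsilon)\,p(e)^{1+\epsilon}\!\int_e r_i^2\Psi_e\,ds,\qquad \|v_e\|_{0,\omega_e}^2\leq C_4\,h(e)\|r_i\|_{0,e}^2,\qquad \|\nabla v_e\|_{0,\omega_e}^2\leq C_5\,\frac{p(e)^2}{h(e)}\|r_i\|_{0,e}^2.
\]
Element-by-element integration by parts on $\omega_e$ produces the identity $\int_e r_i v_e\,ds = B_{\omega_e}(u-\hat u,v_e) - \int_{\omega_e}R_i v_e\,dx$; the first term is bounded by $\sqrt{\beta_1}\enorm{u-\hat u}_{\omega_e}\|v_e\|_{1,\omega_e}$, and the second is absorbed using the element-residual bound already proved. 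A Neumann edge is treated with the one-sided variant, $\omega_e=K$. Collecting the element and edge contributions gives the claimed bound on $\varepsilon_i^2(K)$, with the constant depending only on $\gamma$, $\beta_1$, and $\epsilon$.

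The main obstacle is the bookkeeping of constants in the edge step, since the two-element patch $\omega_e$ forces us to couple the element- and edge-residual estimates and, simultaneously, to control the two one-sided contributions arising from $A|_K$ and $A|_{K'}$ in the integration by parts. The piecewise-constant nature of $A$ (and of $c$) is precisely what keeps the residuals polynomial and lets each appearance of $A$ be absorbed into $\beta_1$; no jump-ratio of $A$ ever appears, so the final constant retains the dependence advertised in the statement.
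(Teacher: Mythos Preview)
Your proposal is correct and follows exactly the route the paper indicates: the paper does not give a self-contained proof but merely states that ``a careful reading of the proofs of~\cite[Lemma~3.4 and~3.5]{Melenk2001}'' shows the efficiency bounds extend to the present operator, and your sketch is precisely that careful reading---the Melenk--Wohlmuth bubble-function argument, with the error-equation identities $\int_K R_i v_K = B_K(u-\hat u,v_K)$ and $\int_e r_i v_e = B_{\omega_e}(u-\hat u,v_e)-\int_{\omega_e}R_i v_e$ replacing their Laplacian versions, and the piecewise-constant data ensuring the residuals remain polynomial so no oscillation term appears. Your observation that every appearance of $A$ and $c$ is absorbed into $\beta_1$ (hence no jump ratio enters the constant) is also exactly the point the paper makes in the remark following the lemma, where it notes $\beta_1$ could be replaced by the local $\beta_{1\omega_K}$.
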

\noindent
Here, $\omega_K$ is the patch of elements which share an edge with $K$.  The global continuity constant $\beta_1$ could be replaced in
Lemma~\ref{efficiency} by a local continuity constant $\beta_{1\omega_K}$ if desired.
\begin{remark}\label{degradation}
The $p$-dependence in local efficiency bound of Lemma~\ref{efficiency}
is unfortunately unavoidable in the proof, and would suggest decreased
efficiency of the estimator as $p_K$ is increased if this estimate
were sharp.  Our numerical experiments do seem to indicate that
efficiency does, in fact, decrease under $hp$-refinement, but that
this decrease is modest in practical computations.
\end{remark}

%%%%%%%%%%%%%%%%%%%%%%%%%%%%%%%%
%%%%%%%%%%%%%%%%%%%%%%%%%%%%%%%%
\iffalse
Let $\hat\phi_i$, $i=1,\cdots,m$ be a Ritz vector basis of $\hat{S}_{m}$ and let $\hat\phi_i$, $i=1,\cdots,m$ also be the Ritz vectors from the
subspace $V^p_k$. This implies that
$$\hat u(\hat\phi_i)=\frac{1}{\hat\mu_i}\hat\phi_i,$$ and so we can apply the estimate (\ref{est:BVP}) immediately. In the case in
which we do not assume that $\hat\phi_i$, $i=1,\cdots,m$ are also the Ritz vectors from $V^p_k$, we would have to solve the
matrix moment problem for in order to be able to apply Theorem \ref{tm:ess}. This can be done efficiently in some cases by the use of the hierarchical basis representation of finite element spaces as has been indicated by \cite{Bank2010}.

We start our considerations for this paper with the trace type estimate
$$
\sum_{i=1}^m\eta_i^2(\hat{S}_{m})\leq C\sum_{i=1}^m\hat\mu_i\sum_{K\in\cT_h}\eta_{0,K}^2(\hat\phi_i)
$$
which follows directly from (\ref{eq:preDiagPr2}) and (\ref{est:BVP}).
\fi
%%%%%%%%%%%%%%%%%%%%%%%%%%%%%%%%
%%%%%%%%%%%%%%%%%%%%%%%%%%%%%%%%
%\begin{remark}\textcolor{red}{I believe that the order of constants is reversed here. Jeff could you correct that, assuming I am not wrong?
%It is usually the upper estimate which depends on the distance to the unwanted part of the spectrum?}
%\end{remark}
 With these results we now state the main theorem.
\begin{theorem}\label{tm:HP_eta}
  Under the assumptions of Theorem~\ref{tm:ess}, we have the following
  upper- and lower-bounds on eigenvalue error,
\begin{equation}\label{relyhp}
C_{1} \sum_{i=1}^\textsc{m}\hat\lambda_i^{-1}\varepsilon_i^2\leq
\sum^\textsc{m}_{i=1}\!
  \frac{\hat\lambda_i-\lambda_i}
  {\hat\lambda_i}\leq
    C_{2} \sum_{i=1}^\textsc{m}\hat\lambda_i^{-1}\varepsilon_i^2~.
 \end{equation}
 The constant $C_1$ depends solely on the ratio $\hat\lambda_1/(2
 \hat\lambda_2)$, the $hp$-regularity constant $\gamma$, the
 continuity constant $\beta_1$, and the maximal polynomial degree
 $\bar{p}=\max_{K\in\cT}p(K)$.  The constant $C_2$ depends solely on
 the relative distance to the unwanted component of the spectrum, the
 $hp$-regularity constant $\gamma$ and the coercivity constant $\beta_0$.
\end{theorem}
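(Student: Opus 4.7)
The plan is to chain together the three ingredients already in place. Theorem~\ref{tm:ess} reduces the eigenvalue error $\sum_{i=1}^\textsc{m} (\hat\lambda_i-\lambda_i)/\hat\lambda_i$ to the approximation defects $\sum_i \eta_i^2(\hat{S}_\textsc{m})$; Lemma~\ref{trace_est} converts these defects into the weighted sum $\sum_i \hat\mu_i^{-1}\enorm{u(\hat\mu_i\hat\phi_i)-\hat u(\hat\mu_i\hat\phi_i)}^2$ (up to the factor $(1+\fD_l)^{-1}$ on the lower side); and Lemmas~\ref{reliable} and~\ref{efficiency} sandwich these boundary value problem errors between the computable residual indicators $\varepsilon_i^2$. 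Essentially the whole proof is a concatenation of these three inequalities in each direction, the only real work being the tracking of constants.

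For the upper bound in~\eqref{relyhp}, I would apply the right-hand inequality of~\eqref{rely}, then the right-hand inequality of~\eqref{eq:preDiagPr3} (which has constant $1$), and finally invoke Lemma~\ref{reliable} term by term to bound each $\enorm{u(\hat\lambda_i\hat\psi_i)-\hat u(\hat\lambda_i\hat\psi_i)}^2$ by $C\varepsilon_i^2$. The resulting constant $C_2$ is the product of the spectral-gap constant $C_\textsc{M}$ from Theorem~\ref{tm:ess} and the reliability constant from Lemma~\ref{reliable}, so its dependencies are exactly those stated.

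For the lower bound I would reverse each step. Theorem~\ref{tm:ess} contributes the factor $\hat\lambda_1/(2\hat\lambda_\textsc{m})$, and Lemma~\ref{trace_est} contributes $(1+\fD_l)^{-1}$, which is bounded away from zero since $\fD_l$ is a relative perturbation that is $<1$ even on coarse meshes, hence absorbable into the final constant. The delicate step is inverting Lemma~\ref{efficiency}: summing the local bound $\varepsilon_i^2(K)\leq c\, p_K^{2+2\epsilon}\enorm{u(\hat\mu_i\hat\phi_i)-\hat u(\hat\mu_i\hat\phi_i)}_{\omega_K}^2$ over $K\in\cT$ and using that shape regularity~(C1) controls the number $N(\gamma)$ of patches $\omega_{K'}$ to which a fixed $K$ can belong, I obtain
\begin{equation*}
\varepsilon_i^2 \;\leq\; c\,\bar p^{\,2+2\epsilon}\,N(\gamma)\,\enorm{u(\hat\mu_i\hat\phi_i)-\hat u(\hat\mu_i\hat\phi_i)}^2.
\end{equation*}
Combining the three reversed inequalities then yields $C_1$ of order $\hat\lambda_1 / \bigl(2\hat\lambda_\textsc{m} (1+\fD_l)\, c\, N(\gamma)\,\bar p^{\,2+2\epsilon}\bigr)$, matching the stated dependencies on $\gamma$, $\beta_1$ (absorbed in $c$ via the local continuity constant of Lemma~\ref{efficiency}), and $\bar p$.

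The main obstacle is bookkeeping rather than mathematics: the polynomial-degree factor $\bar p^{\,2+2\epsilon}$---which, as noted in Remark~\ref{degradation}, is unavoidable in Lemma~\ref{efficiency}---must be carried carefully into $C_1$ together with the finite-overlap constant from shape regularity, and one must verify that the relative perturbation $\fD_l$ from~\eqref{diagonal} does not degrade as the mesh is refined. The upper bound inherits no $p$-dependence beyond what is already hidden in $C_\textsc{M}$, which is consistent with the theorem's assertion that only the lower bound involves $\bar p$.
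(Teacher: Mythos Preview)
Your proposal is correct and matches the paper's own proof, which simply states that the assertions follow directly from Theorem~\ref{tm:ess}, Lemma~\ref{trace_est}, and Lemmas~\ref{reliable} and~\ref{efficiency}. You have supplied precisely the bookkeeping (the finite-overlap argument when summing the local efficiency bounds, and the tracking of the $(1+\fD_l)^{-1}$ and $\bar p^{\,2+2\epsilon}$ factors) that the paper leaves implicit.
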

\begin{proof} These assertions follow directly from Theorem
  \ref{tm:ess}, Lemma~\ref{trace_est}, and Lemmas~\ref{reliable}
  and~\ref{efficiency}.
\end{proof}
\begin{remark}\label{local_ind}
It is relative local indicators $\hat\mu_i^{-1}\varepsilon_i^2(K)$
which will be used to mark elements for refinement, as will be
described in Section~\ref{Exper}.
\end{remark}
A similar result holds for the eigenvectors and eigenspaces. Let now $$E(\lambda_\textsc{m})=\sum_{
\lambda\leq\lambda_{\textsc{m}},\;\lambda\in\Spec_B}E_\lambda$$ be the orthogonal projection
onto the eigenspace belonging to the first $\textsc{m}$ eigenvalues of the form $B$ as given in Theorem \ref{tm:HP_eta}. Let also $\|\cdot\|_{S_2}$ be
the Hilbert-Schmidt norm on the ideal of all Hilbert-Schmidt operators, see \cite{SimonTrace}. We now have the eigenvector result.

\begin{theorem}\label{tm:HP_eta_v}
  Under the assumptions of Theorem~\ref{tm:ess}, we have the following
  upper- and lower-bounds on eigenfunction error,
\begin{equation}\label{relyhp_v}
 \|\sin\Theta(E(\lambda_{\textsc{m}}),\hat S_{\textsc{m}})\|_{S_2}\leq
    C_{\textsc{m},\cT} \sqrt{\sum_{i=1}^\textsc{m}\hat\lambda_i^{-1}\varepsilon_i^2}.
  \end{equation}
  The constant $C_{\textsc{m},\cT}$ depends solely on the relative
  distance to the unwanted component of the spectrum
  (e.g. $\frac{\lambda_{\textsc{M}}-\lambda_{\textsc{M}+1}}{
    \lambda_{\textsc{M}}+\lambda_{\textsc{M}+1}}$), the
  $hp$-regularity constant $\gamma$ and the continuity constant
  $\beta_1$.
\end{theorem}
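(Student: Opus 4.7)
The plan is to deduce \eqref{relyhp_v} by chaining together three ingredients already available in the paper. The first is an abstract $\sin\Theta$ estimate that controls the Hilbert--Schmidt distance between $E(\lambda_{\textsc{m}})\cH$ and $\hat S_{\textsc{m}}$ by the approximation defects $\eta_i^2(\hat S_{\textsc{m}})$, in strict analogy with the eigenvalue inequality of Theorem~\ref{tm:ess}. The second is Lemma~\ref{trace_est}, which converts $\sum_i\eta_i^2(\hat S_{\textsc{m}})$ into a weighted sum of boundary-value-problem energy errors for the particular right-hand sides $\hat\lambda_i\hat\phi_i$. The third is Lemma~\ref{reliable}, which bounds each of those BVP energy errors by the $hp$ residual indicator $\varepsilon_i^2$. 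Squaring the target inequality and composing the three bounds then yields \eqref{relyhp_v}.

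First I would invoke the $\sin\Theta$ theorem of \cite[Theorem~4.1, eq.~(3.10)]{Grubisic2009} (see also \cite[Theorem~3.10]{Bank2010}) which, under the same spectral separation condition already imposed in Theorem~\ref{tm:ess}, yields
\[
\|\sin\Theta(E(\lambda_{\textsc{m}}),\hat S_{\textsc{m}})\|_{S_2}^2 \;\leq\; \tilde C_{\textsc{m}} \sum_{i=1}^{\textsc{m}} \eta_i^2(\hat S_{\textsc{m}}),
\]
where $\tilde C_{\textsc{m}}$ depends only on the relative distance from the cluster $\{\lambda_1,\dots,\lambda_{\textsc{m}}\}$ to its complement in $\Spec_B$. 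This estimate is proved by expanding $\hat\phi_i$ in the spectral resolution $\sum_{\lambda\in\Spec_B} E_\lambda = I$, splitting the expansion at the gap, applying the identity $\|\sin\Theta(P,Q)\|_{S_2}^2=\|(I-P)Q\|_{S_2}^2$, and using the defect definition~\eqref{appdefect1.1} on the contributions coming from eigenvalues outside the cluster.

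Next, since under the hypotheses of Theorem~\ref{tm:HP_eta} the cluster consists of the first $\textsc{m}$ eigenvalues (so $\hat\mu_i=\hat\lambda_i$ and $\hat\phi_i=\hat\psi_i$), Lemma~\ref{trace_est} gives
\[
\sum_{i=1}^{\textsc{m}} \eta_i^2(\hat S_{\textsc{m}}) \;\leq\; \sum_{i=1}^{\textsc{m}} \hat\lambda_i^{-1}\,\enorm{u(\hat\lambda_i\hat\phi_i)-\hat u(\hat\lambda_i\hat\phi_i)}^2,
\]
and Lemma~\ref{reliable} bounds each summand on the right by $C\varepsilon_i^2$ with a constant depending only on $\gamma$ and $\beta_0$. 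Concatenating the three inequalities and taking a square root delivers \eqref{relyhp_v}, with $C_{\textsc{m},\cT}^2$ collecting $\tilde C_{\textsc{m}}$, the reliability constant from Lemma~\ref{reliable}, and the mild $hp$-regularity factors; the dependence on $\beta_1$ that is asserted in the theorem enters only through the Ritz-vector normalization and the constant $1+\fD_l$ in Lemma~\ref{trace_est} (which is absorbed since the upper bound in that lemma is trivially controlled by $1$).

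The main obstacle is the first step: proving the $\sin\Theta$ bound in the \emph{Hilbert--Schmidt} norm (rather than the more standard operator norm of Davis--Kahan) with the defects $\eta_i^2$ as quadratic data, under the mixed setup where $E(\lambda_{\textsc{m}})$ is $L^2$-orthogonal while $\hat S_{\textsc{m}}$ is the $B$-orthogonal Ritz subspace. Concretely one has to expand $(I-E(\lambda_{\textsc{m}}))\hat\phi_i$ in the $B$-orthonormal basis $\{\psi_j\}_{j>\textsc{m}}$, estimate the squared $L^2$-norm of each such expansion by $\tilde C_{\textsc{m}}$ times the energy-norm contribution coming from eigenvalues above the gap, and then recognize the total as $\sum_i\eta_i^2$ via a min-max (equivalently, a Ky Fan-type) argument on the Gram matrix pair $(E,G)$ from Section~\ref{ApproxDef}. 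Once this spectral-theoretic step is executed, the remainder of the proof is a mechanical composition of the lemmas already in place.
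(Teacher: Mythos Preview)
Your proposal is correct and follows exactly the route the paper intends: the paper gives no explicit proof of Theorem~\ref{tm:HP_eta_v}, but immediately after Theorem~\ref{tm:ess} it points to the $\sin\Theta$ results \cite[Theorem~4.1, (3.10)]{Grubisic2009} and \cite[Theorem~3.10]{Bank2010}, and the theorem is then obtained by the same three-step chain you describe (eigenvector $\sin\Theta$ bound $\to$ Lemma~\ref{trace_est} $\to$ Lemma~\ref{reliable}), mirroring the proof of Theorem~\ref{tm:HP_eta}. Your final paragraph on the ``main obstacle'' is unnecessary since that step is imported wholesale from the references, and your attempt to locate the $\beta_1$ dependence in $1+\fD_l$ is off---the upper bound in Lemma~\ref{trace_est} does not involve $\fD_l$, and the reliability constant from Lemma~\ref{reliable} actually depends on $\beta_0$ (the paper's statement of $\beta_1$ here appears to be a slip).
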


%%%%%%%%%%%%
\iffalse
\begin{theorem}\label{tm:HP_eta_v}
  Let $B(\cdot,\cdot)$ be the standard divergence type sesquilinear form which
  generates a self-adjoint operator and let
  $\lambda_{\textsc{m}}<\lambda_{\textsc{m}+1}$.  If
%in addition
  $\hat{S}_\textsc{m}={\rm
    span}\{\hat\phi_1,\cdots,\hat\phi_\textsc{M}\}$ is such that
  $\frac{\eta_\textsc{m}(\hat{S}_\textsc{m})}{1-\eta_\textsc{m}(\hat{S}_\textsc{m})}<
  \frac{\lambda_{\textsc{m}+1}-\hat\lambda_\textsc{m}}{\lambda_{\textsc{m}+1}+\hat\lambda_\textsc{m}}$
  and $\hat\phi_i$, $i=1,\cdots,\textsc{m}$ are the Ritz vectors associated with the lowermost Ritz values from $V^p_m$ then
\begin{equation}\label{relyhp_v}
 \|\sin\Theta(E(\lambda_{\textsc{m}}),\hat S_m)\|_{S_1}\leq
    C_{\textsc{m},\cT_h} \sqrt{\sum_{i=1}^\textsc{m}\hat\lambda_i^{-1}\varepsilon_i^2}.
  \end{equation}The constant $C_{\textsc{m},\cT_h}$ depends solely on the relative
  distance to the unwanted component of the spectrum (e.g. $\frac{\lambda_{\textsc{M}}-\lambda_{\textsc{M}+1}}{
    \lambda_{\textsc{M}}+\lambda_{\textsc{M}+1}}$), the
  $hp$-regularity constant $\gamma$ and the continuity constant $M$..
\end{theorem}
\fi
%%%%%%%%%%%%%%%%%% 

\section{Experiments}
\label{Exper}
In the numerical experiments we illustrate the efficiency of the
estimator~\eqref{relyhp} on
several problems of the general form
\begin{align}\label{gen_eig_prob}
\cL \psi=\lambda\psi\mbox{ in }\Omega\quad,\quad \|\psi\|=1~,
\end{align}
for a second-order, linear elliptic operator $\cL$, where homogeneous
Dirichlet or Neumann conditions are imposed on the boundary.  Plots
are given of the total relative error, its \textit{a posteriori}
estimate, and the associated effectivity quotient, shown,
respectively, below:
\begin{align*}
  \sum^\textsc{m}_{i=1}\!  \frac{\hat\lambda_i-\lambda_i}
  {\hat\lambda_i} \quad,\quad
  \sum_{i=1}^\textsc{m}\hat\lambda_i^{-1}\varepsilon_i^2 \quad,\quad
  \frac{\sum^\textsc{m}_{i=1}\!  \frac{\hat\lambda_i-\lambda_i}
    {\hat\lambda_i}}{\sum_{i=1}^\textsc{m}\hat\lambda_i^{-1}\varepsilon_i^2}~.
\end{align*}
These are plotted against the square-root of the size of the discrete
problem $\mathrm{DOFs}=\text{dim}(V^p_k)$.  For most of our problems, the exact
eigenvalues are unknown, so we take highly accurate computations on
very large problems as ``exact'' for these comparisons.

\begin{figure}[ht]
\begin{center}
\includegraphics[height=2.0in]{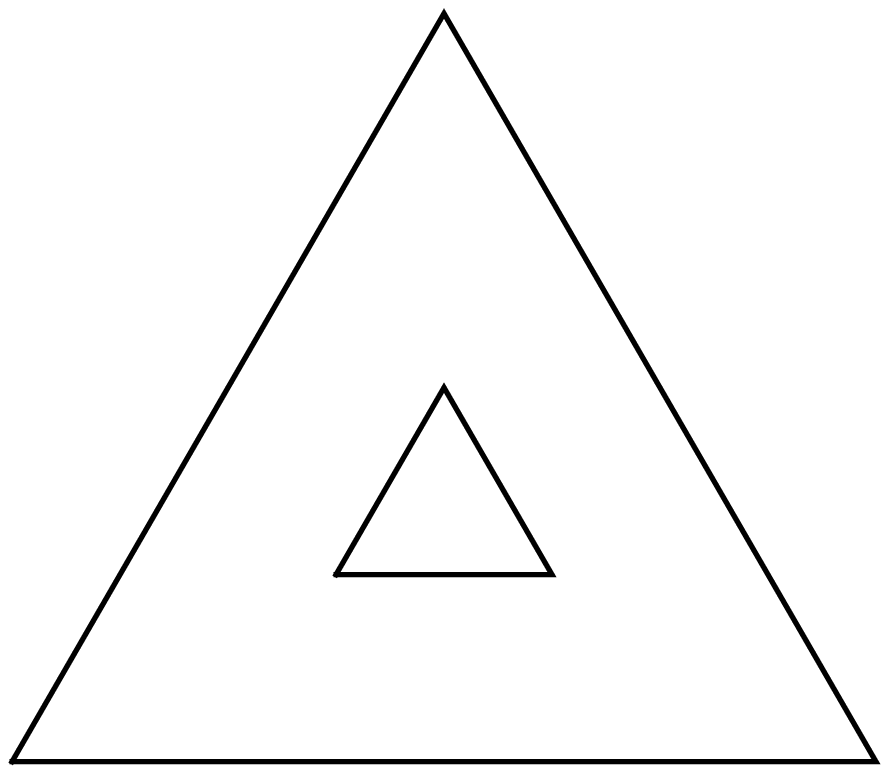}
\includegraphics[width=2.0in]{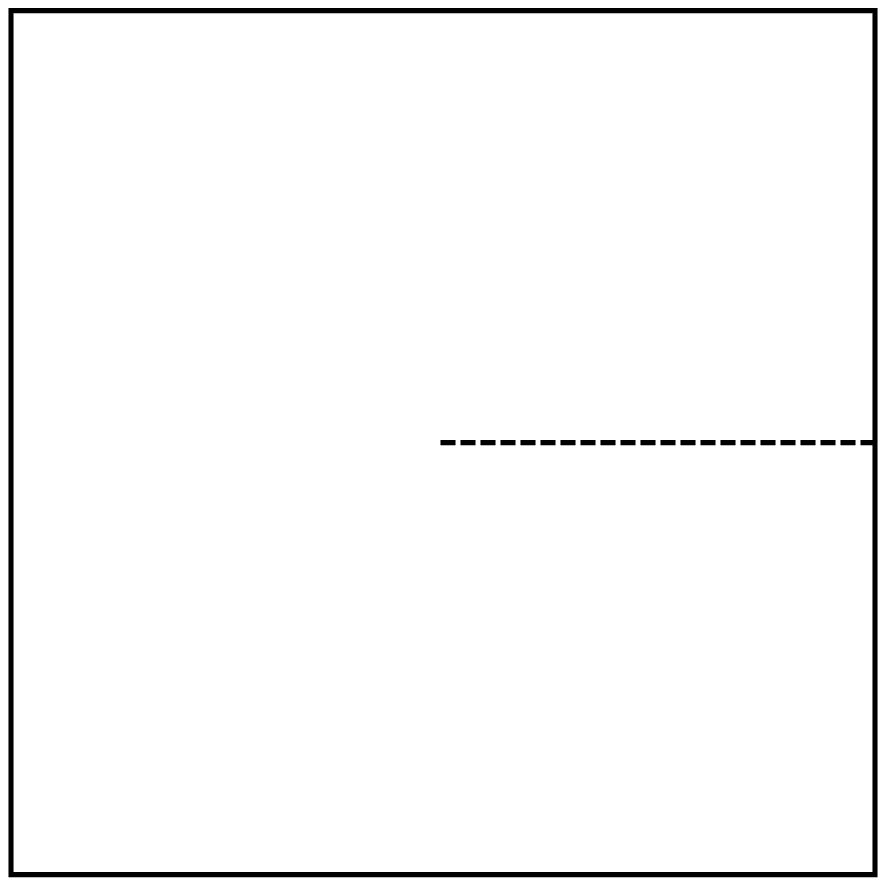}
\includegraphics[width=2.0in]{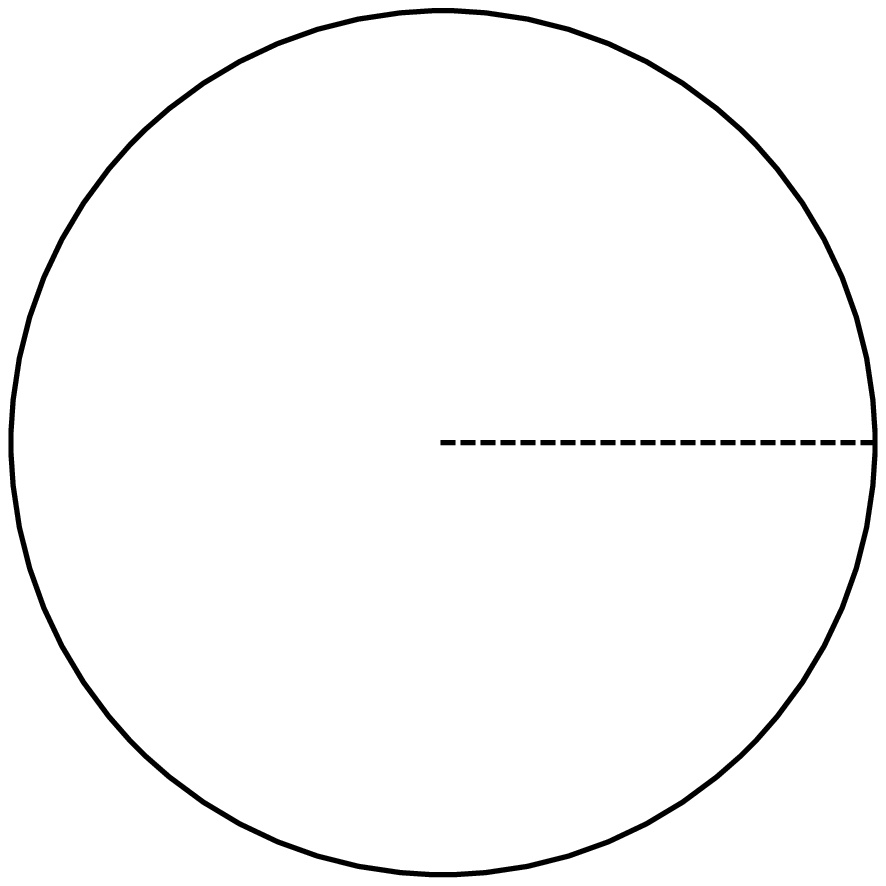}
\end{center}
\caption{\label{fig:domains} Some of the domains used for the experiments.}
\end{figure}

In all simulations we used an $hp$-adaptive algorithm in order to get the best convergence possible.
To drive the $hp$-adaptivity we use the element-wise contributions to
the quantity $\sum_{i=1}^\textsc{m}\hat\lambda_i^{-1}\varepsilon_i^2$,
to provide local error indicators. Then,
we apply a simple fixed-fraction strategy to mark the elements to
adapt. For each marked element, the choice of whether to locally
refine it or vary its approximation order is made by estimating the
local analyticity of the computed eigenfunctions in the interior of the
element by computing the coefficients of the $L^2$-orthogonal polynomial
expansion (cf.~\cite{melenk}).

\subsection{Dirichlet Laplacian on the Unit Triangle}\label{Triangle}
As a simple problem for which the eigenvalues and eigenfunctions are
explicitly known (cf.~\cite{McCartin2003}), we consider the problem
where: $\cL=-\Delta$, $\Omega$ is equilateral triangle of having
unit edge-length, and $\psi=0$ on $\partial\Omega$. The eigenvalues
can be indexed as
\[
\lambda_{mn}=\frac{16\pi^2}{9}(m^2+mn+n^2)~,
\]
and we refer interested readers to~\cite{McCartin2003} for explicit
descriptions of the eigenfunctions.
\begin{figure}[ht]
\begin{center}
\includegraphics[scale=0.6]{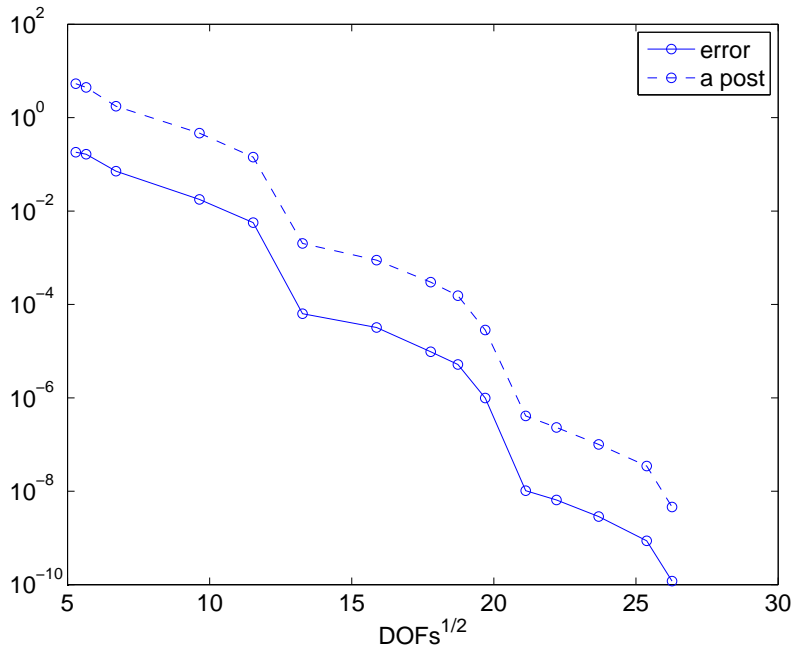}
\end{center}
\caption{\label{fig:error_tri} Errors and error estimates. Triangle problem.}
\end{figure}

\begin{figure}[ht]
\begin{center}
\includegraphics[scale=0.6]{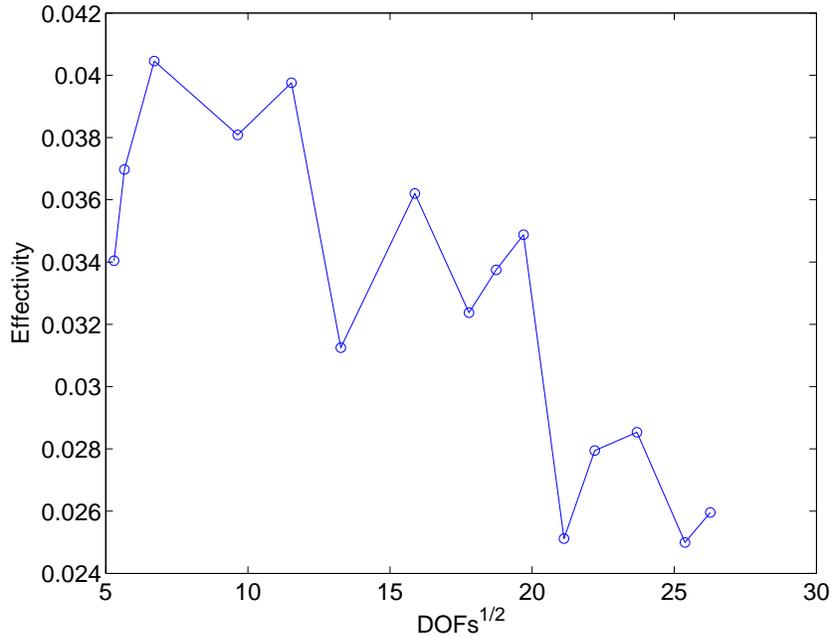}
\end{center}
\caption{\label{fig:rel_tri} Effectivity index. Triangle problem.}
\end{figure}

In Figure~\ref{fig:error_tri} we plot the total relative error for the
first four eigenvalues, together with the associated error estimate;
and in Figure~\ref{fig:rel_tri} we plot the effectivity quotient.  It
is clear that the convergence is exponential in this case, and that
the effectivity undergoes a mild degradation as the problem size
increases.  This modest decrease in effectivity is in line with
Remark~\ref{degradation}, and it is also seen in our remaining
experiments.

\subsection{Dirichlet Laplacian on the Unit Triangle with on a
  Hole}\label{Triangle_Hole}

\begin{figure}[ht]
\begin{center}
\includegraphics[scale=0.6]{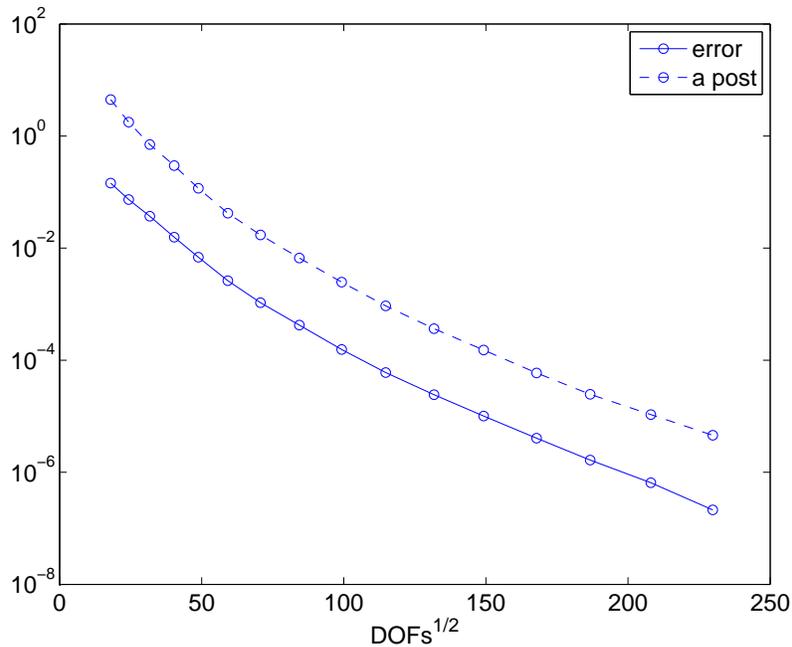}
\end{center}
\caption{\label{fig:error_triup} Errors and error estimates. Triangle
  with hole.}
\end{figure}

\begin{figure}[ht]
\begin{center}
\includegraphics[scale=0.6]{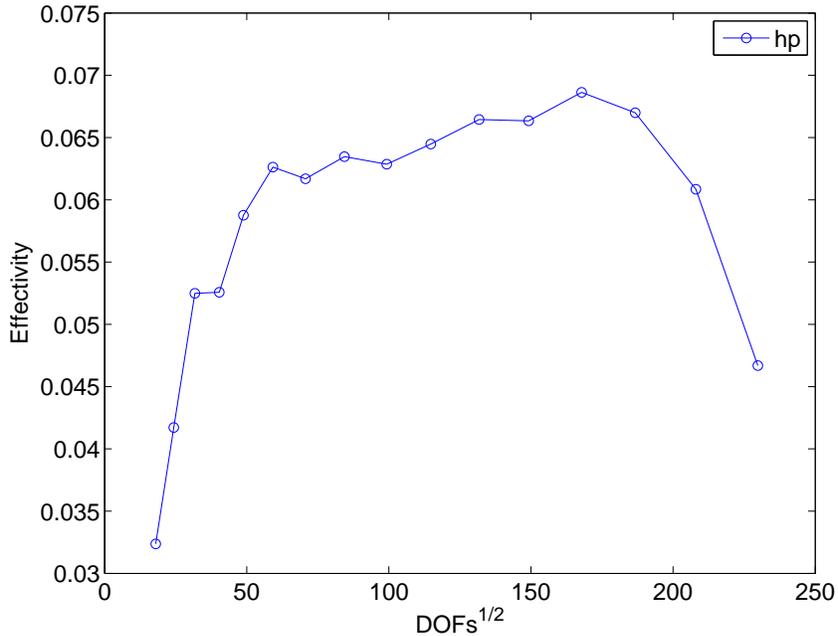}
\end{center}
\caption{\label{fig:rel_triup} Effectivity index. Triangle with hole.}
\end{figure}
We now consider the problem where $\cL=-\Delta$, $\Omega$ is the
equilateral triangle having edge-length $2$ with an equilateral triangle having
edge-length $1/2$ removed from its center (see
Figure~\ref{fig:domains}), and $\psi=0$ on $\partial\Omega$.  For such
a problem, it is expected that some of the eigenfunctions will have an
$r^{3/5}$-type singularity at each of the three interior corners,
where $r$ is the distance to the nearest corner.  In this case, the
exact eigenvalues are unknown, so we computed the following reference
values of them on a very large problem: 40.4650426 for the first
eigenvalue and 43.4868466 for the second and third, which form a
double eigenvalue. These values are accurate at least up to 1e-6.

In Figure~\ref{fig:error_triup} we plot the relative error and error
estimates together, for the first three eigenvalues, and in
Figures~\ref{fig:rel_triup} we plot the corresponding values of the
effectivity quotient.  We again see exponential convergence and a
modest deterioration of effectivity.

\subsection{Square Domain with Discontinuous Reaction Term}\label{Dauge1}
For this pair of problems we take $\Omega=(0,1)^2$,
$\nabla\psi\cdot\mb{n}=0$ on $\partial\Omega$, and
$\cL\,\psi=-\Delta \psi+\kappa V_{MD}\cdot\psi$, where $V_{MD}$ is the
characteristic function of the touching squares labelled $\cM_1$ in
Figure~\ref{figMonique}.  We consider two values of the constant
parameter, $\kappa= 10, 100$.  It is straightforward to see that the
corresponding bilinear form is an inner-product in this case (no zero
eigenvalues), and that all eigenfunctions are at least in $H^2$.

\begin{figure}[ht]
\begin{center}
\begin{picture}(0,0)%
\includegraphics{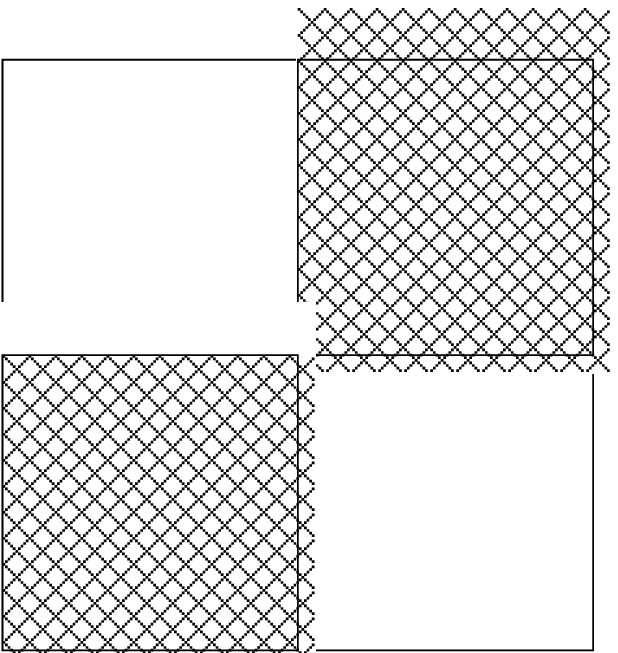}%
\end{picture}%
\setlength{\unitlength}{4144sp}%
\begingroup\makeatletter\ifx\SetFigFont\undefined%
\gdef\SetFigFont#1#2#3#4#5{%
  \reset@font\fontsize{#1}{#2pt}%
  \fontfamily{#3}\fontseries{#4}\fontshape{#5}%
  \selectfont}%
\fi\endgroup%
\begin{picture}(2724,2724)(1339,-523)
\put(1850,1490){\makebox(0,0)[lb]{\smash{{\SetFigFont{12}{14.4}{\familydefault}{\mddefault}{\updefault}{$\mathcal{M}_2$}%
}}}}
\put(3200,140){\makebox(0,0)[lb]{\smash{{\SetFigFont{12}{14.4}{\familydefault}{\mddefault}{\updefault}{$\mathcal{M}_2$}%
}}}}
\put(3200,1490){\makebox(0,0)[lb]{\smash{{\SetFigFont{12}{14.4}{\familydefault}{\mddefault}{\updefault}{$\mathcal{M}_1$}%
}}}}
\put(1850,140){\makebox(0,0)[lb]{\smash{{\SetFigFont{12}{14.4}{\familydefault}{\mddefault}{\updefault}{$\mathcal{M}_1$}%
}}}}
\end{picture}%
\end{center}
\caption{\label{figMonique} A modification of the touching squares
  example of M. Dauge.}
\end{figure}

%%%%%%%%%%%%%%%%%%%%%%%%%%%%%%%%%%%%%%%%%%%

For $\kappa=10$, we have in Figure~\ref{fig:error_neu_potential_10_0}
the total relative error and error estimates for the first four
eigenvalues; and the effectivity quotient is given in
Figure~\ref{fig:rel_neu_potential_10_0}. For these simulations we used
the following reference values for the first four eigenvalues, which
are 1e-8 accurate: 4.150242455, 10.706070962, 18.779725462,
25.150325247.  The analogous plots for the first four eigenvalues in
the case $\kappa=100$ are given in
Figure~\ref{fig:error_neu_potential_100_0} and
Figure~\ref{fig:rel_neu_potential_100_0}.  For these simulations, we
used the following reference values for the first four eigenvalues,
which are 1e-8 accurate: 13.210576406, 13.990033964, 60.294151672,
64.840268299.  In both cases we see apparent exponential convergence,
and reasonable effectivity behavior.  It seem clear from the error
plots that for both values of $\kappa$ the convergence is exponential.

\begin{figure}[ht]
\begin{center}
\includegraphics[scale=0.6]{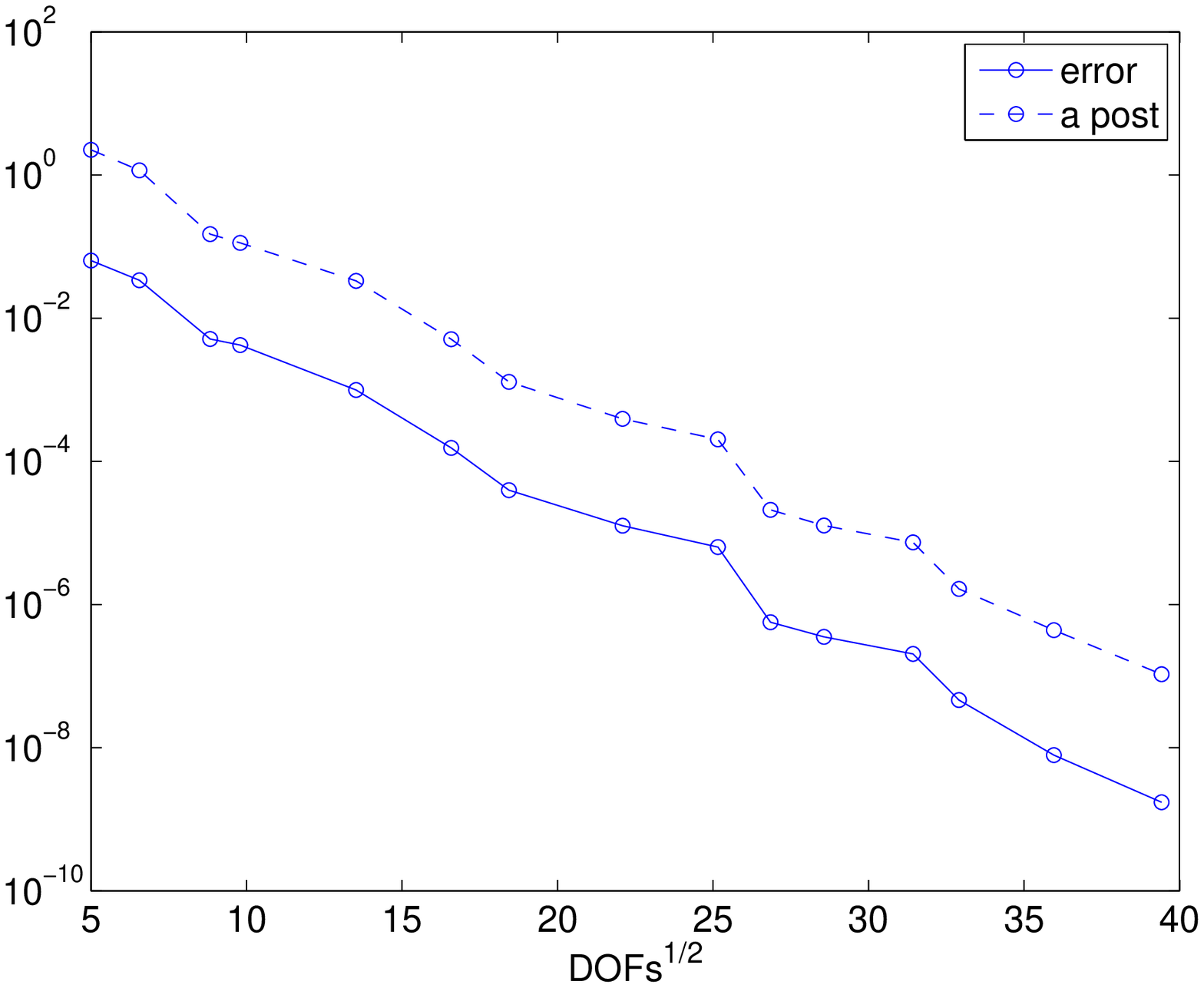}
\end{center}
\caption{\label{fig:error_neu_potential_10_0} Errors and error estimates.
  Discontinuous reaction term, $\kappa=10$.}
\end{figure}

\begin{figure}[ht]
\begin{center}
\includegraphics[scale=0.6]{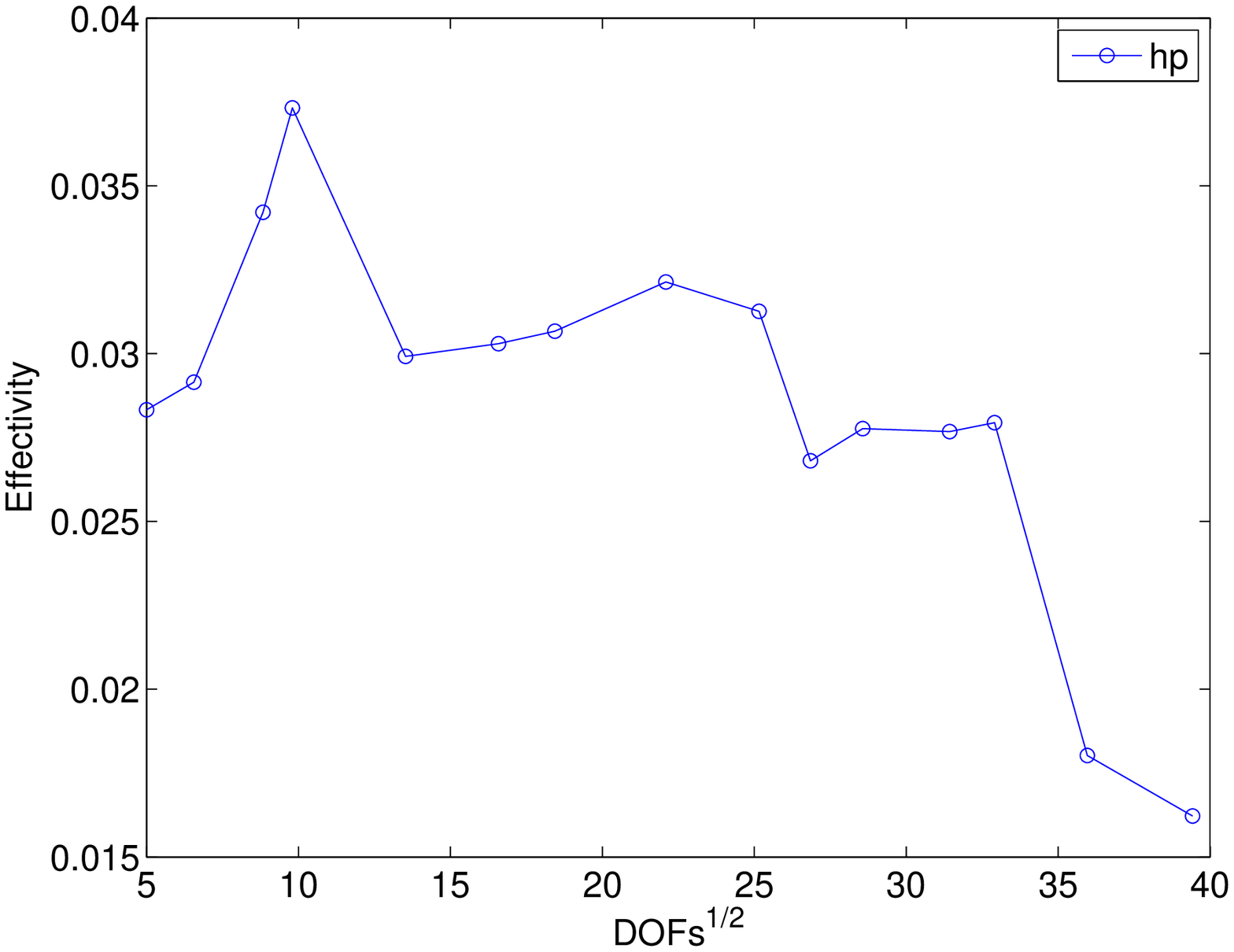}
\end{center}
\caption{\label{fig:rel_neu_potential_10_0} Effectivity
  index. Discontinuous reaction term, $\kappa=10$.}
\end{figure}

\begin{figure}[ht]
\begin{center}
\includegraphics[scale=0.6]{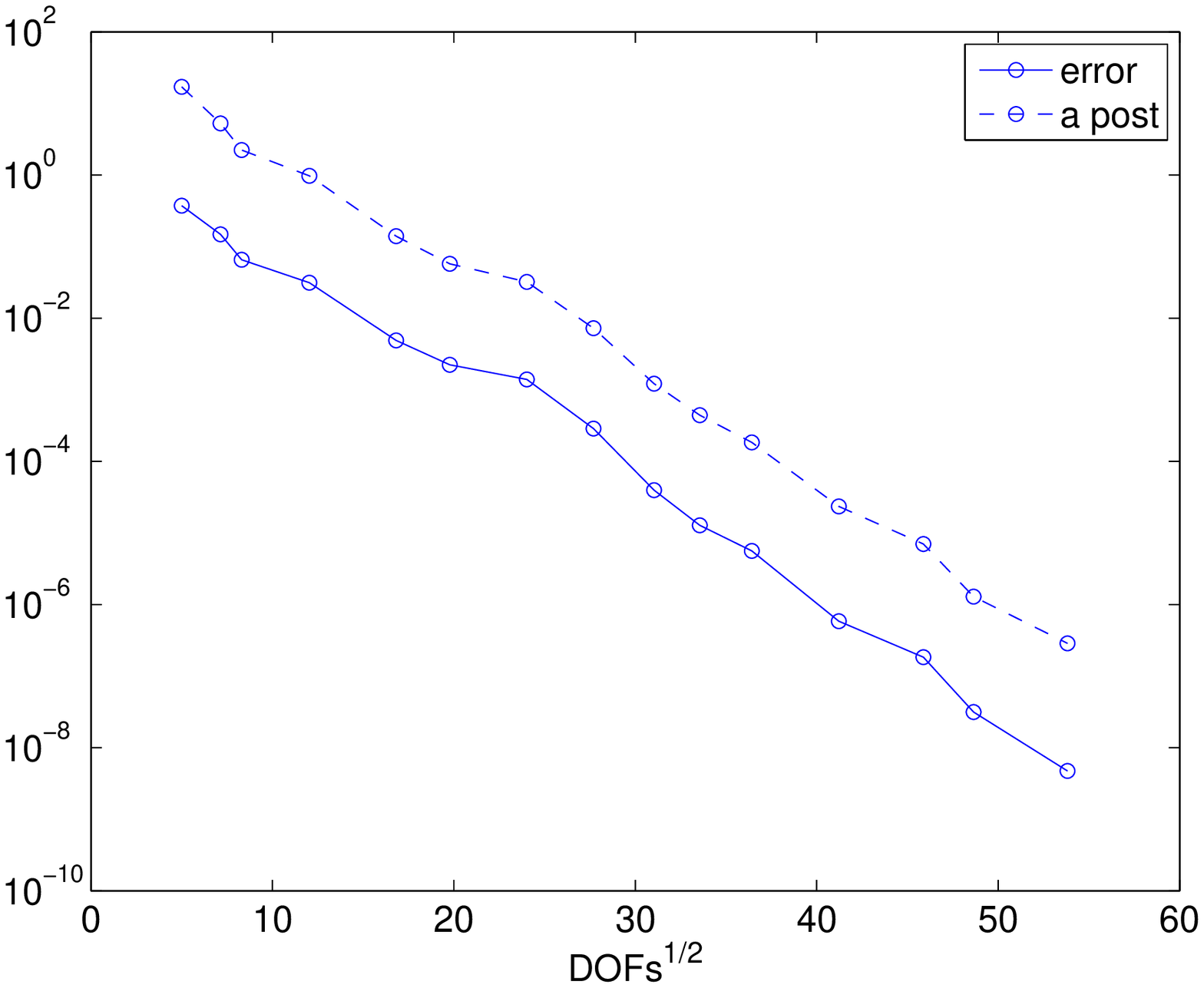}
\end{center}
\caption{\label{fig:error_neu_potential_100_0} Errors and error estimates.
  Discontinuous reaction term, $\kappa=100$.}
\end{figure}

\begin{figure}[ht]
\begin{center}
\includegraphics[scale=0.6]{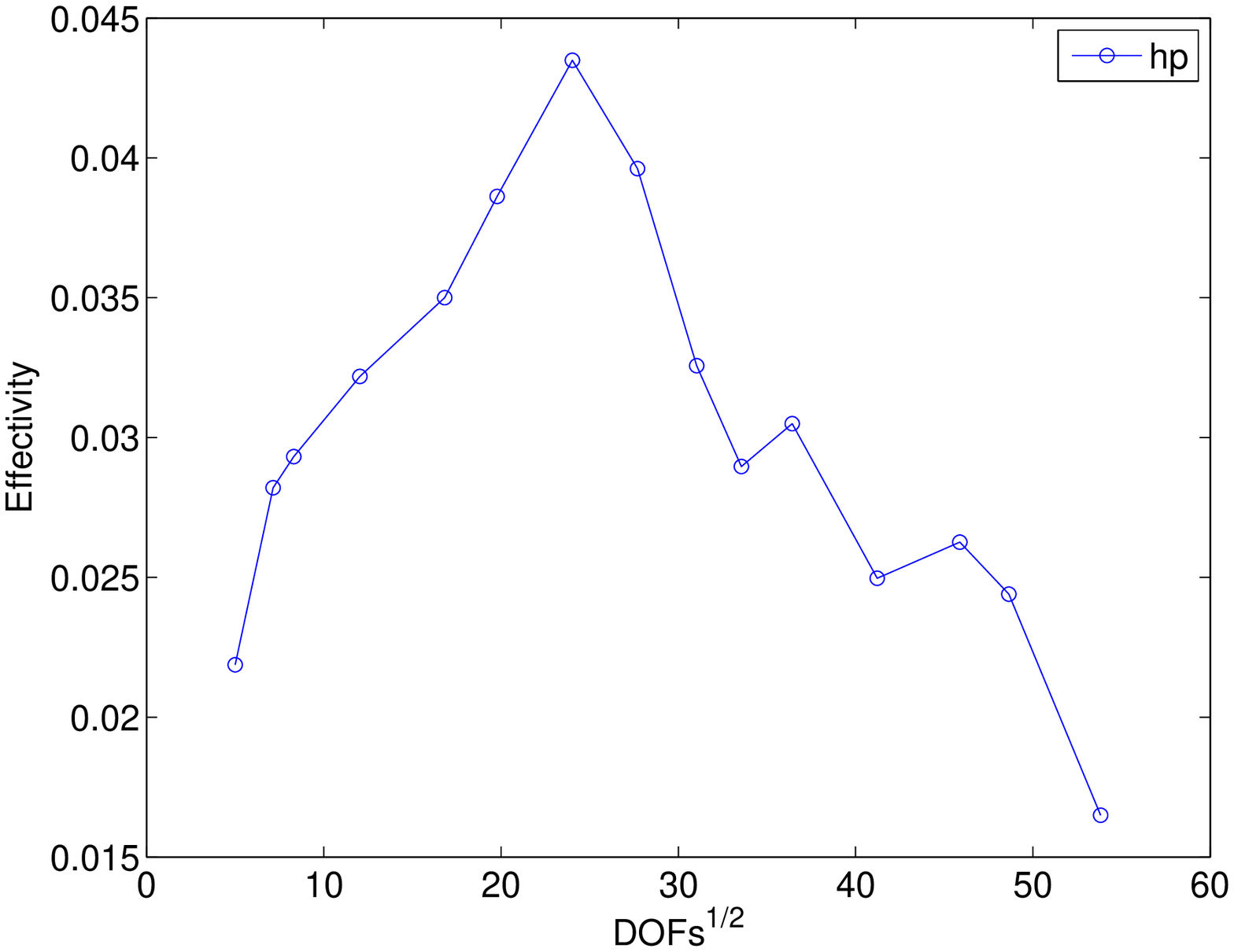}
\end{center}
\caption{\label{fig:rel_neu_potential_100_0} Effectivity
  index. Discontinuous reaction term, $\kappa=100$.}
\end{figure}

\subsection{Square Domain with Discontinuous Diffusion Term}
Using the domain $\Omega=(0,1)^2$, partitioned into regions $\cM_1$
and $\cM_2$ as in Figure~\ref{figMonique}, and homogeneous Dirichlet
conditions $\psi=0$ on $\partial\Omega$, we consider the operator
$\cL=-\nabla\cdot(a\nabla)$, where $a=1$ in $\cM_2$ and $a$ in $\cM_1$
may vary.  Such problems can have arbitrarily bad singularities
at the cross-point of the domain depending on the relative sizes of $a$ in the two
subdomains---see, for example,~\cite{Kellogg1975,Knyazev2003}
and~\cite[Example 5.3]{Morin2000}.

We have considered two values for $a$ in $\cM_1$: 10 and 100. Since
the exact eigenvalues are not available, we computed the following
three reference values for the first three eigenvalues when $a=10$:
64.226529416, 75.028156269, 141.161506328; and the following three
reference values for the first three eigenvalues when $a=100$:
77.800981966, 78.564198245, 193.916538067. All reference values are at
least 1e-8 accurate.  The relative error and effectivity plots for
both cases are given in
Figures~\ref{fig:error_jumps_dauge_10}-\ref{fig:rel_jumps_dauge_100},
and again we see apparent exponential convergence.

\begin{figure}[ht]
\begin{center}
\includegraphics[scale=0.6]{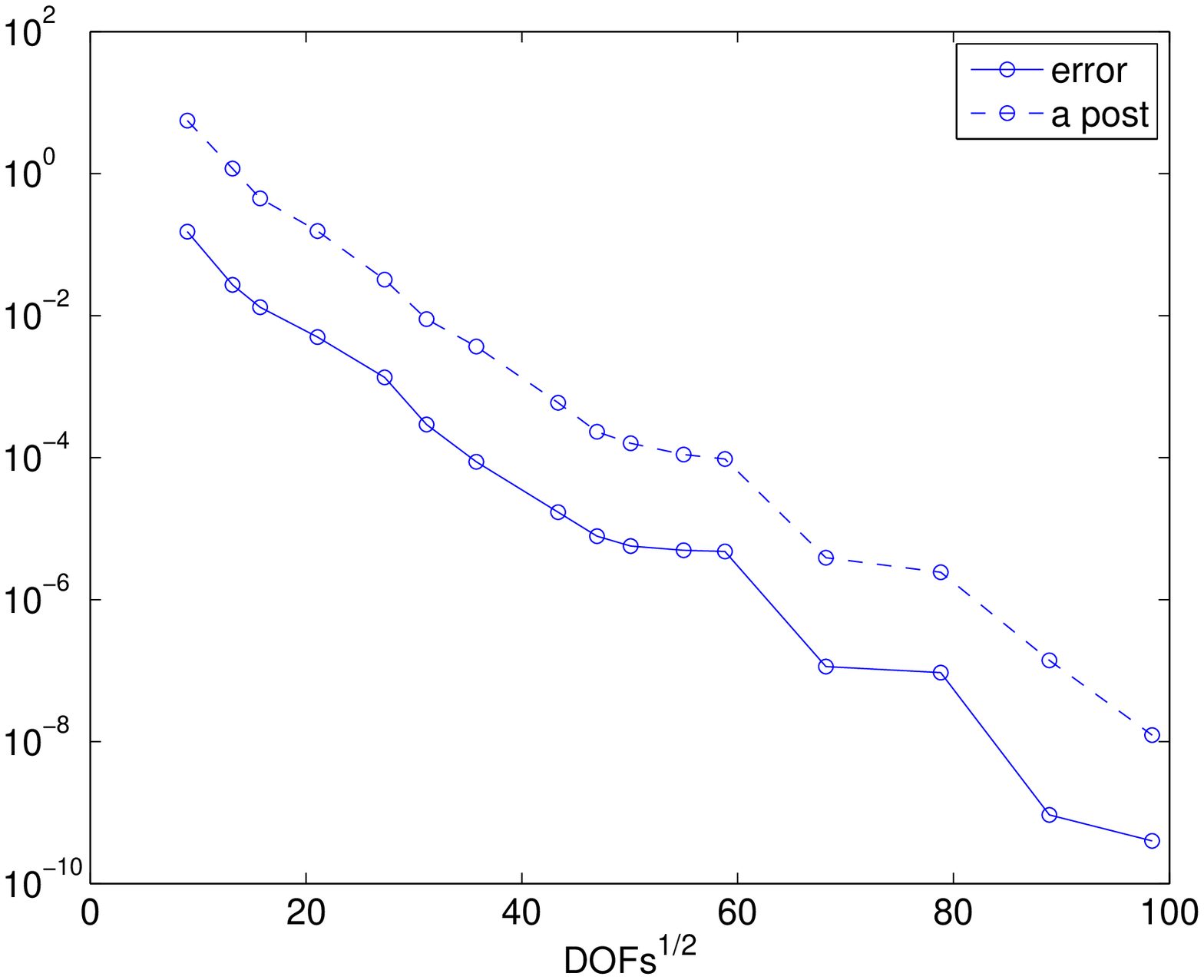}
\end{center}
\caption{\label{fig:error_jumps_dauge_10} Errors and error estimates.
  Discontinuous diffusion term, $\kappa=10$.}
\end{figure}

\begin{figure}[ht]
\begin{center}
\includegraphics[scale=0.6]{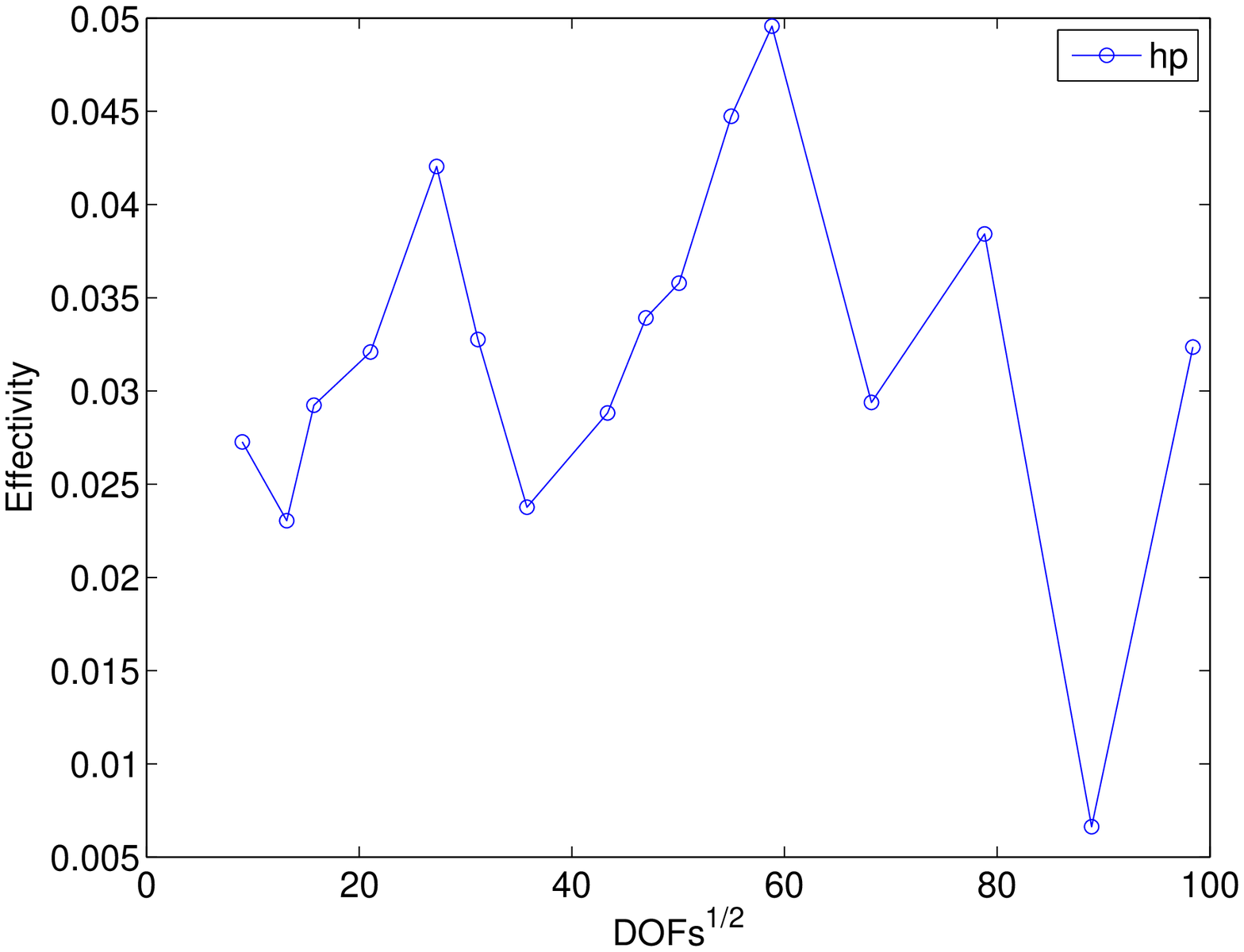}
\end{center}
\caption{\label{fig:rel_jumps_dauge_10} Effectivity
  index. Discontinuous diffusion term, $\kappa=10$.}
\end{figure}

\begin{figure}[ht]
\begin{center}
\includegraphics[scale=0.6]{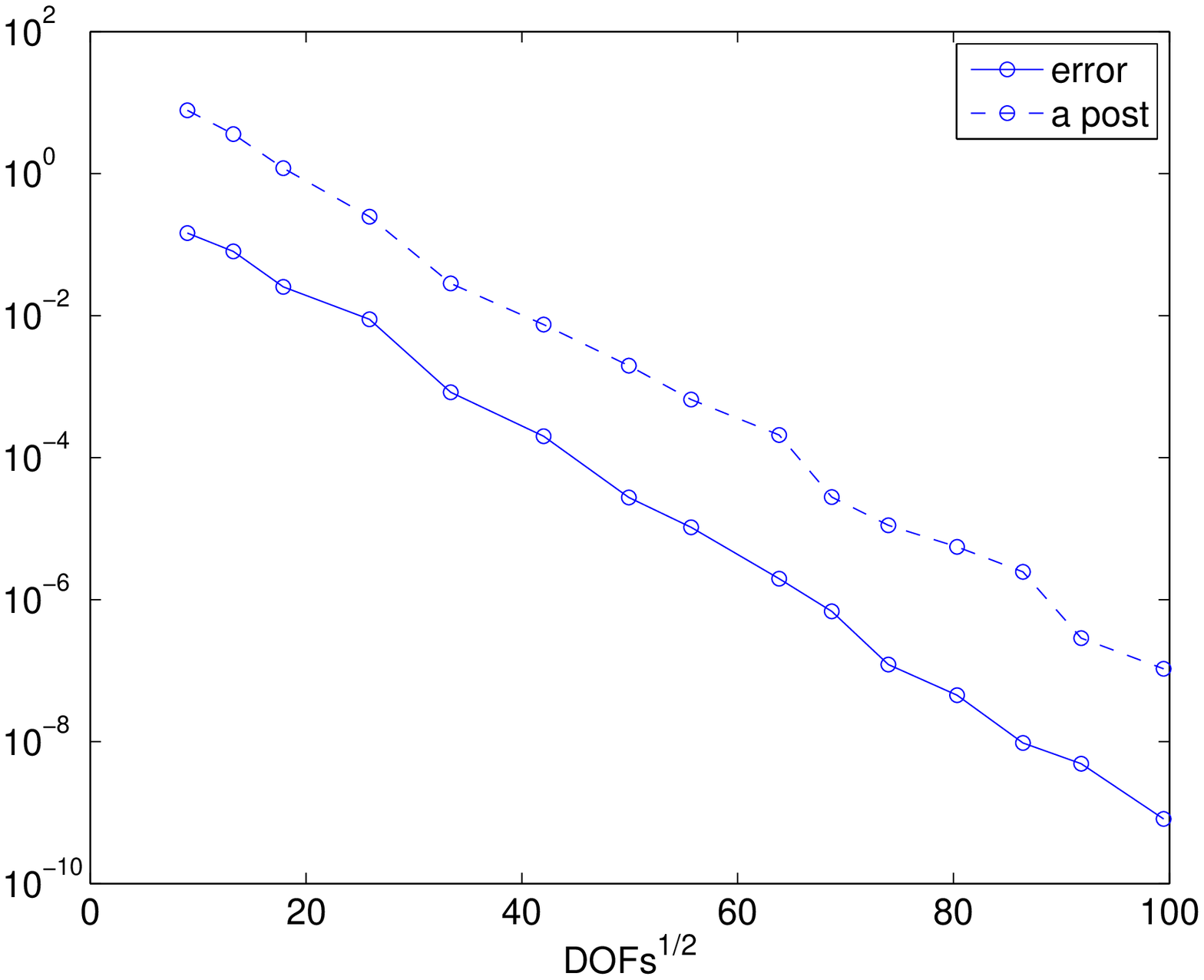}
\end{center}
\caption{\label{fig:error_jumps_dauge_100} Errors and error estimates.
  Discontinuous diffusion term, $\kappa=100$.}
\end{figure}

\begin{figure}[ht]
\begin{center}
\includegraphics[scale=0.6]{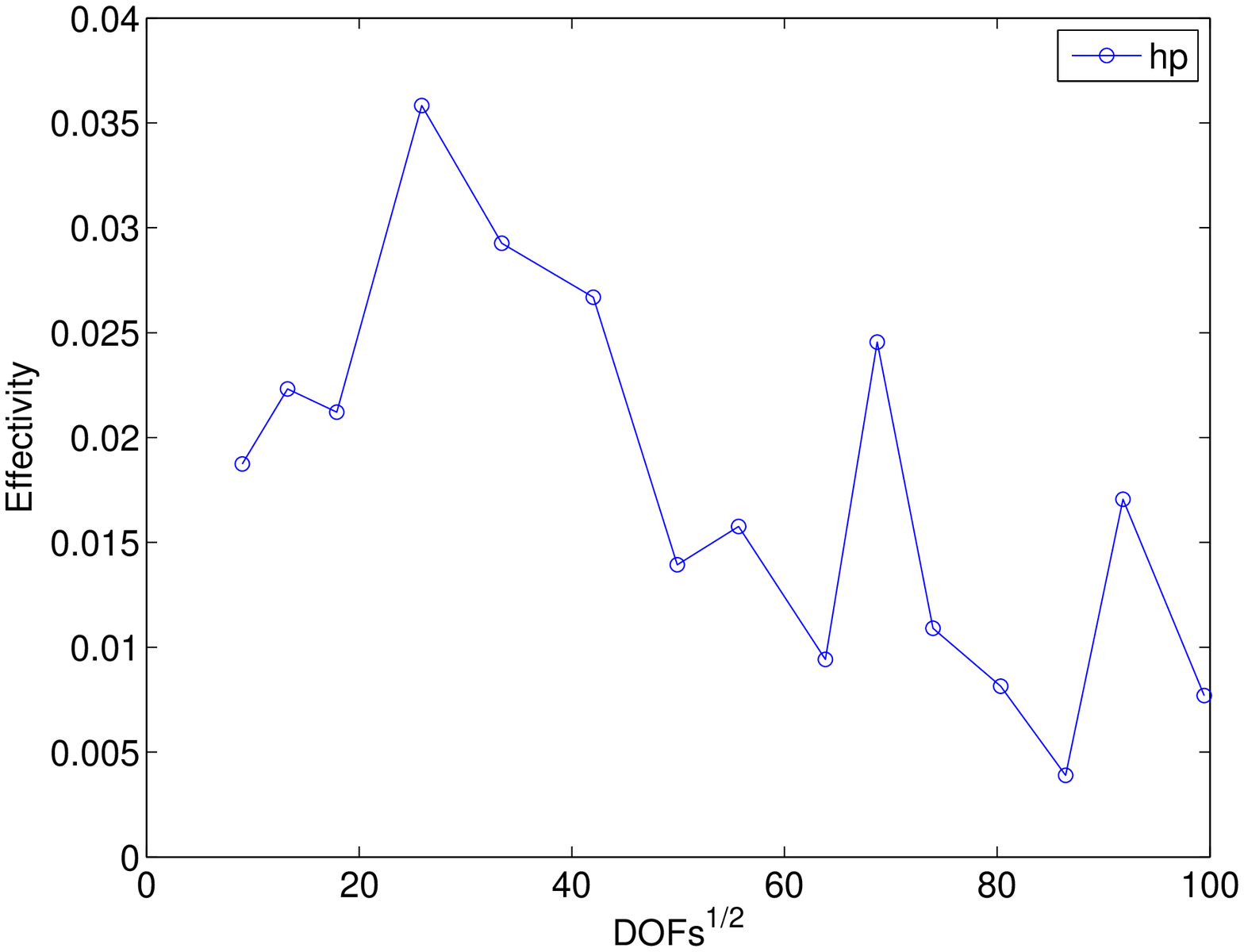}
\end{center}
\caption{\label{fig:rel_jumps_dauge_100} Effectivity
  index. Discontinuous diffusion term, $\kappa=100$.}
\end{figure}

\subsection{Square Domain with a Slit}
For this problem, $\cL=-\Delta$ and $\Omega=(0,1)^2\setminus S$, where
$S=\{(x,1/2):\,1/2<x<1\}$; this is pictured in
Figure~\ref{fig:domains}, with $S$ as the dashed segment.  Homogeneous
Neumann conditions are imposed on both ``sides'' of $S$ and
homogeneous Dirichlet boundary conditions are imposed on the rest of
the boundary of $\Omega$.  For this example we used the following
reference values for the first four eigenvalues with in brackets the
corresponding accuracy:
20.739208802(8), 34.485320(5), 50.348022005(8), 67.581165196(8).

To give some indication of the nature of the eigenfunctions in the
interior, we briefly consider a related problem where $\Omega$ is the
unit disk with a slit along the positive $x$-axis, as pictured in
Figure~\ref{fig:domains}, with the same boundary conditions. In this
case, the eigenvalues and eigenfunctions are known explicitly.  For
$k\geq 0$ and $m\geq 1$, let $z_{km}$ be the $m^{th}$ positive root of
the first-kind Bessel function $J_{k/2}$.  It is straightforward to
verify that, up to renormalization of eigenfunctions, the eigenpairs
can be indexed by
\begin{align*}
\lambda_{km}=z_{km}^2\quad,\quad
\psi_{km}=J_{k/2}(z_{km}r)\sin(k\theta/2)\quad,\quad k,m\in\NN~.
\end{align*}
We see that $\psi_{km}\sim \sin(k
\theta/2)\left(\frac{z_{km}r}{2}\right)^{1/2}$ as $r\to 0$, so
singularities of types $r^{k/2}$ occur infinitely many times in the
spectrum.  The strongest of these singularities is of type $r^{1/2}$,
and it occurs in the eigenfunction associated with the second
eigenvalue, for example.  The same asymptotic behavior of the
eigenfunctions near the crack tip is expected for the square
and circular domains, and in Figure~\ref{fig:eig2} we show
contour plots of the second eigenvalue in both cases.  For the
circular domain, the second eigenvalue and function can, in
principle be obtained to arbitrary precision using a computer
algebra system.  Using {\sc Mathematica}, we computed the
second eigenvalue (the smallest positive root of $J_{1/2}$)
to 20 digits, 9.869604401089358619, and generated the
corresponding contour plot of the eigenfunction.
\begin{figure}[ht]
\begin{center}
\includegraphics[width=3.0in]{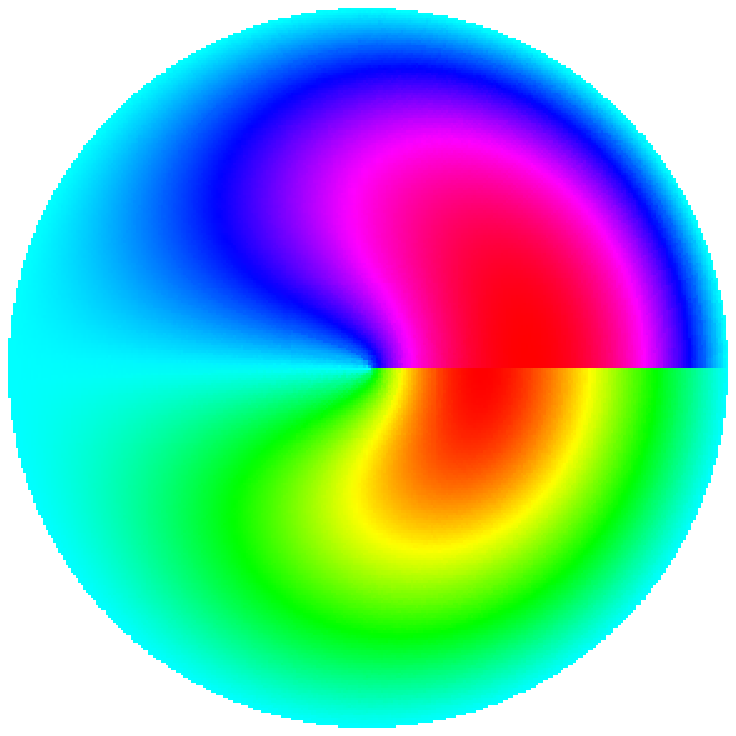}
\includegraphics[width=3.0in]{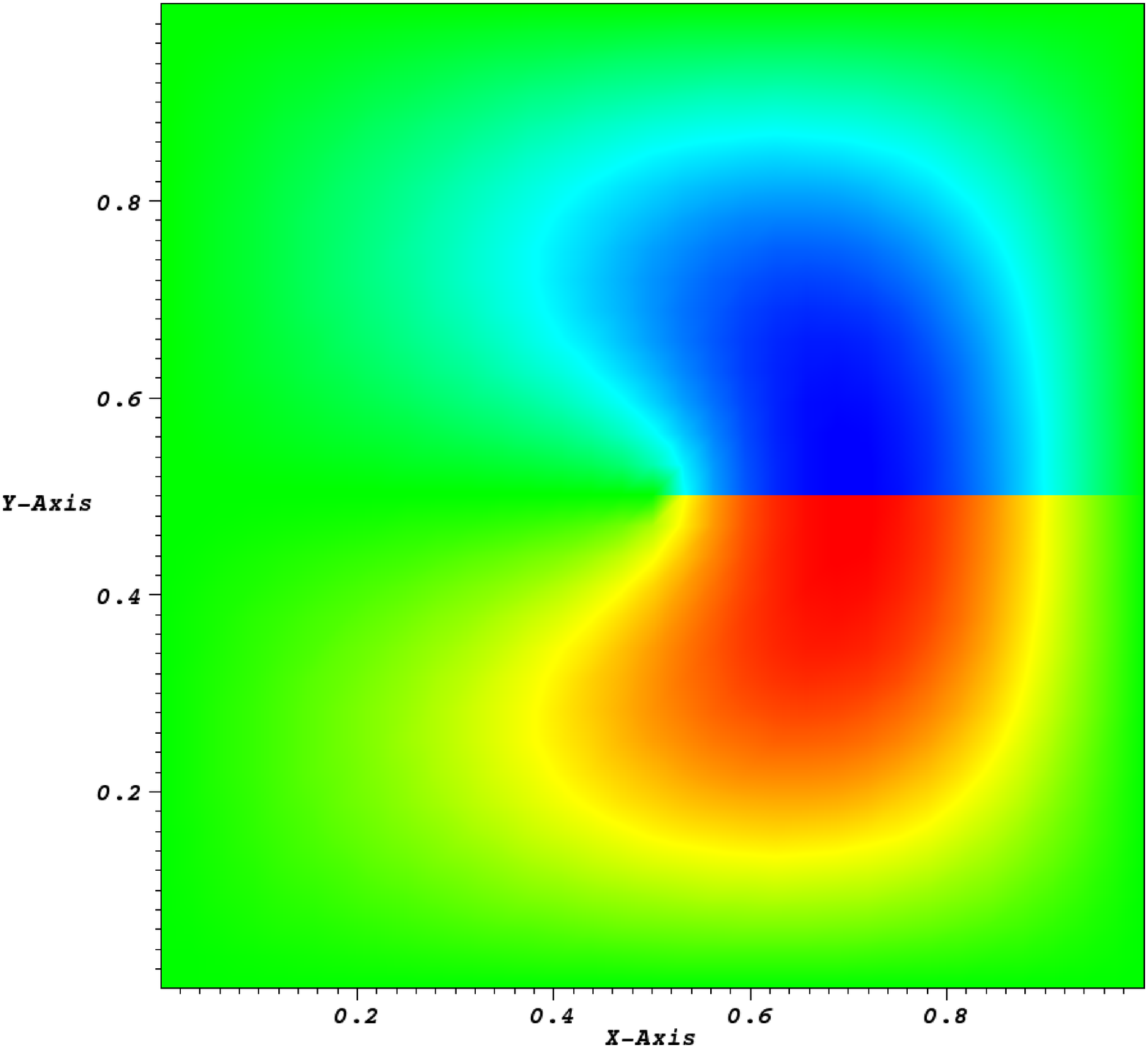}
\end{center}
\caption{\label{fig:eig2} Second eigenfunction for the slit circle
  (top) and slit square.}
\end{figure}

\begin{figure}[ht]
\begin{center}
\includegraphics[scale=0.6]{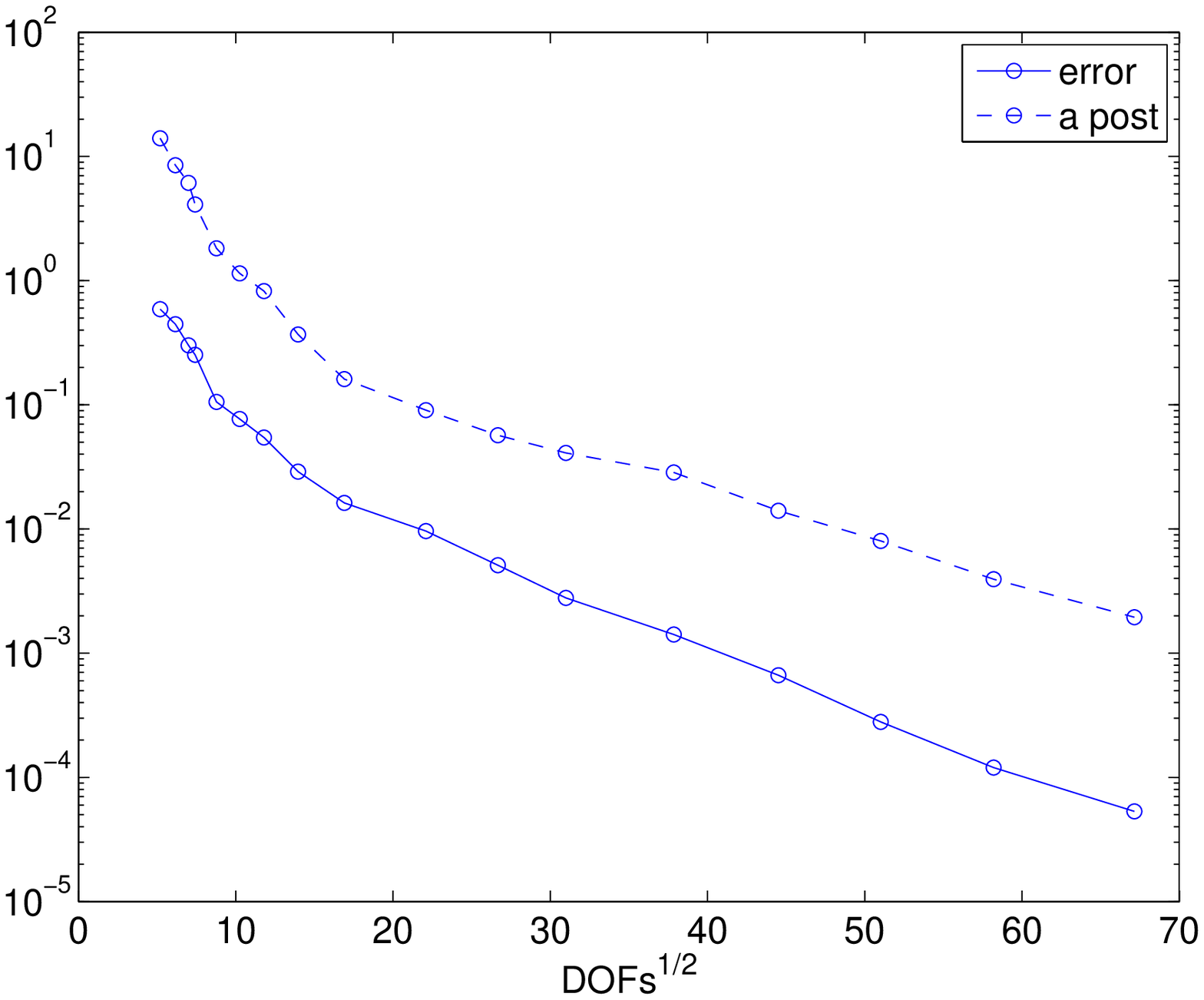}
\end{center}
\caption{\label{fig:lap_err_crack_hp} Errors and error estimates. Slit
  square. First four eigenvalues.}
\end{figure}

\begin{figure}[ht]
\begin{center}
\includegraphics[scale=0.6]{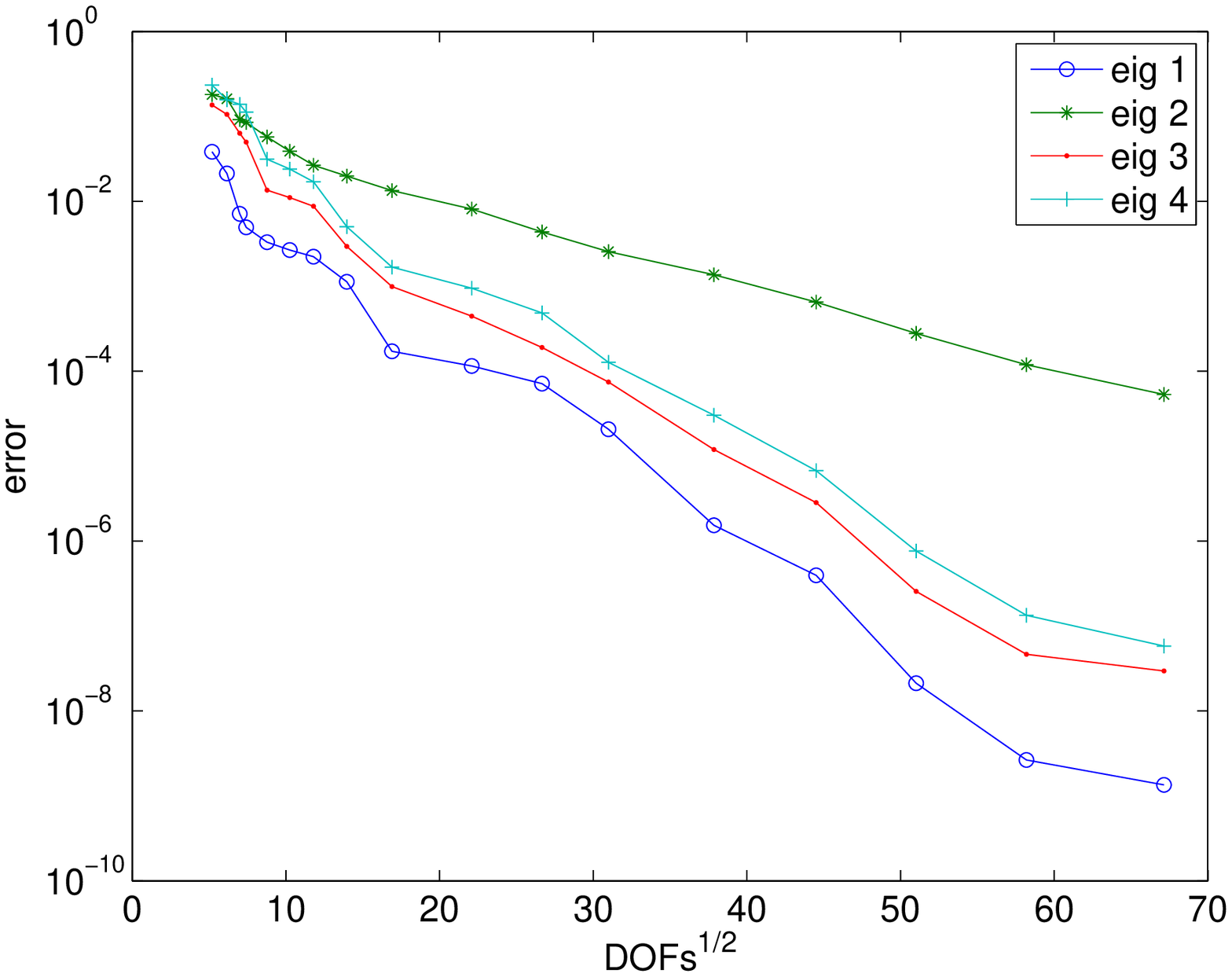}
\end{center}
\caption{\label{fig:lap_err_split_crack_hp} Errors from each eigenvlaue. Slit
  square.}
\end{figure}

\begin{figure}[ht]
\begin{center}
\includegraphics[scale=0.6]{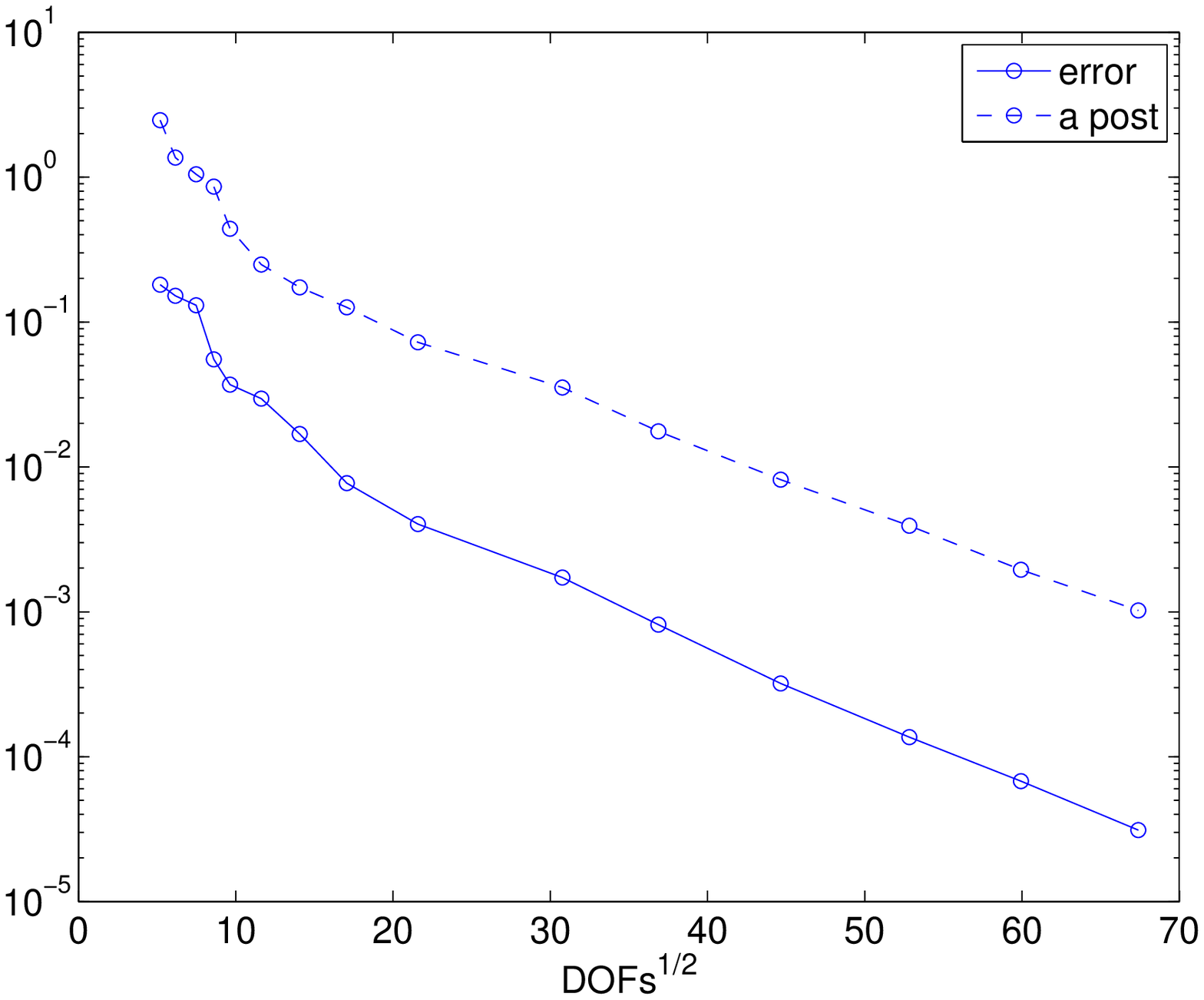}
\end{center}
\caption{\label{fig:lap_err_crack_hp2} Errors and error estimates. Slit
  square. Second eigenvalue only.}
\end{figure}

In Figure~\ref{fig:lap_err_crack_hp} we plot the total relative errors
and error estimates for the first four eigenvalues, and in
Figure~\ref{fig:lap_err_split_crack_hp} the individual eigenvalue
errors are shown.  It is clear from the second of these figures that
the second, which corresponds to the most singular eigenfunction,
clearly has the worst convergence rate (as expected), and that this is
what ``spoils'' the convergence of the cluster of the first four
eigenvalues.  This becomes even more apparent when
Figure~\ref{fig:lap_err_crack_hp2}, which corresponds to the second
eigenvalue alone, is compared with
Figure~\ref{fig:lap_err_crack_hp}---they are nearly identical.

\section*{Acknowledgement}
L. G. was supported by the grant:
``Spectral decompositions -- numerical methods and applications'', Grant Nr. 037-0372783-2750 of the Croatian MZOS.
We would like to thanks Paul Houston and Edward Hall for kind support and very useful discussions.
%%%%%%%%%%%%%%%%
% begin bibliography
%%%%%%%%%%%%%%%%
%\bibliography{titles}

\begin{thebibliography}{10}

\bibitem{Ammari2009}
H.~Ammari, Y.~Capdeboscq, H.~Kang, and A.~Kozhemyak.
\newblock Mathematical models and reconstruction methods in magneto-acoustic
  imaging.
\newblock {\em European J. Appl. Math.}, 20(3):303--317, 2009.

\bibitem{Ammari2009a}
H.~Ammari, H.~Kang, E.~Kim, and H.~Lee.
\newblock Vibration testing for anomaly detection.
\newblock {\em Math. Methods Appl. Sci.}, 32(7):863--874, 2009.

\bibitem{Ammari2009b}
H.~Ammari, H.~Kang, and H.~Lee.
\newblock Asymptotic analysis of high-contrast phononic crystals and a
  criterion for the band-gap opening.
\newblock {\em Arch. Ration. Mech. Anal.}, 193(3):679--714, 2009.

\bibitem{Bank2010}
R.~Bank, L.~Grubi\v{s}i\'c, and J.~S. Ovall.
\newblock A framework for robust eigenvalue and eigenvector error estimation
  and ritz value convergence enhancement.
\newblock {\em submitted}, MPI MSI Leipzig preprint 42/2010, 2010.

\bibitem{Barnett2008}
A.~H. Barnett and T.~Betcke.
\newblock Stability and convergence of the method of fundamental solutions for
  {H}elmholtz problems on analytic domains.
\newblock {\em J. Comput. Phys.}, 227(14):7003--7026, 2008.

\bibitem{Betcke2007}
T.~Betcke.
\newblock A {GSVD} formulation of a domain decomposition method for planar
  eigenvalue problems.
\newblock {\em IMA J. Numer. Anal.}, 27(3):451--478, 2007.

\bibitem{Betcke2005}
T.~Betcke and L.~N. Trefethen.
\newblock Reviving the method of particular solutions.
\newblock {\em SIAM Rev.}, 47(3):469--491 (electronic), 2005.

\bibitem{melenk}
T.~Eibner and J.~Melenk.
\newblock An adaptive strategy for hp-{FEM} based on testing for analyticity.
\newblock {\em Comp. Mech.}, 39:575--595, 2007.

\bibitem{Eisenstat1974}
S.~C. Eisenstat.
\newblock On the rate of convergence of the {B}ergman-{V}ekua method for the
  numerical solution of elliptic boundary value problems.
\newblock {\em SIAM J. Numer. Anal.}, 11:654--680, 1974.

\bibitem{GiGr2011}
S.~Giani and I.~G. Graham.
\newblock Adaptive finite element methods for computing band gaps in photonic
  crystals.
\newblock {\em Numer. Math.}, (to appear), 2011.

\bibitem{GiGrOv}
S.~Giani, L.~Grubi\v{s}i\'c, and J.~S. Ovall.
\newblock Benchmark results for testing adaptive finite element eigenvalue
  procedures.
\newblock {\em Appl. Numer. Math.}, (to appear), 2011.

\bibitem{Grubisic2009}
L.~Grubi\v{s}i\'c and J.~S. Ovall.
\newblock On estimators for eigenvalue/eigenvector approximations.
\newblock {\em Math. Comp.}, 78:739--770, 2009.

\bibitem{Kellogg1975}
R.~B. Kellogg.
\newblock On the {P}oisson equation with intersecting interfaces.
\newblock {\em Applicable Anal.}, 4:101--129, 1974/75.
\newblock Collection of articles dedicated to Nikolai Ivanovich Muskhelishvili.

\bibitem{Knyazev2003}
A.~Knyazev and O.~Widlund.
\newblock Lavrentiev regularization + {R}itz approximation = uniform finite
  element error estimates for differential equations with rough coefficients.
\newblock {\em Math. Comp.}, 72(241):17--40 (electronic), 2003.

\bibitem{McCartin2003}
B.~J. McCartin.
\newblock Eigenstructure of the equilateral triangle, part i: The dirichlet
  problem.
\newblock {\em SIAM Review}, 45(2):pp. 267--287, 2003.

\bibitem{Melenk2001}
J.~M. Melenk and B.~I. Wohlmuth.
\newblock On residual-based a posteriori error estimation in {$hp$}-{FEM}.
\newblock {\em Adv. Comput. Math.}, 15(1-4):311--331 (2002), 2001.
\newblock A posteriori error estimation and adaptive computational methods.

\bibitem{Morin2000}
P.~Morin, R.~H. Nochetto, and K.~G. Siebert.
\newblock Data oscillation and convergence of adaptive {FEM}.
\newblock {\em SIAM J. Numer. Anal.}, 38(2):466--488 (electronic), 2000.

\bibitem{SimonTrace}
B.~Simon.
\newblock {\em Trace ideals and their applications}, volume~35 of {\em London
  Mathematical Society Lecture Note Series}.
\newblock Cambridge University Press, Cambridge, 1979.

\bibitem{Fairweather2009}
R.~Tankelevich, G.~Fairweather, and A.~Karageorghis.
\newblock Three-dimensional image reconstruction using the {PF}/{MFS}
  technique.
\newblock {\em Eng. Anal. Bound. Elem.}, 33(12):1403--1410, 2009.

\end{thebibliography}
%\bibliographystyle{siam}
\bibliographystyle{abbrv}
\def\cprime{$'$}

\end{document}